\numberwithin{equation}{section}
\newtheorem{thm}{Theorem}[section]
\newtheorem{claim}[thm]{Claim}
\newtheorem{lemma}[thm]{Lemma}
\newtheorem{prop}[thm]{Proposition}
\DeclareMathOperator\supp{supp}
\newcommand{\R}{\mathbb{R}}
\newcommand{\N}{\mathbb{N}}
\newcommand{\LL}{\mathcal{L}}
\newcommand{\DD}{\mathcal{D}}
\begin{document}
%\nocite{*}xx
\title[Controllability KdV-B equation on the unbounded Domain]{Controllability Aspects of the Korteweg-de Vries Burgers Equation on Unbounded  Domains}
\author[F.A. Gallego]{F.A. Gallego}
\address{Institute of Mathematics, Federal University of Rio de Janeiro, UFRJ, P.O. Box 68530, CEP 21945-970, Rio de Janeiro, RJ, Brazil.}
\email{fgallego@ufrj.br, ferangares@gmail.com}
%\author[Pazoto]{Ademir F. Pazoto}
%\address{Institute of Mathematics, Federal University of Rio de Janeiro, UFRJ, P.O. Box 68530, CEP 21945-970, Rio de Janeiro, RJ, Brazil.}
%\email{ademir@im.ufrj.br}
\subjclass[2010]{Primary: 35Q53, Secondary: 37K10, 93B05, 93D15}
\keywords{ KdV-B equation, controllability, Carleman estimate, unbounded domain}
\date{}

\begin{abstract}
The aim of this work is to consider the controllability problem of the  linear system associated to Korteweg-de Vries Burgers equation posed in the whole real line. We obtain a sort of exact controllability for solutions in $L^2_{loc}(\R^2)$ by deriving an internal observability inequality and a Global Carlemann estimate. Following the ideas contained in \cite{rosier2000}, the problem is reduced to prove an approximate theorem. 
\end{abstract}

\maketitle

%%%%%%%%%%%%%%%%%%%%%%%%%%%%%%%%%%%%%%%%%%%%%%%%%%%%%%%%%%%%%%
\section{Introduction}
%%%%%%%%%%%%%%%%%%%%%%%%%%%%%%%%%%%%%%%%%%%%%%%%%%%%%%%%%%%%%%
The Korteweg-de Vries Burgers equation (KdV-B) was derived by Su and Gardner \cite{su1969korteweg} for a wide class of nonlinear system in the weak nonlinearity and long wavelength approximation. This equation has been obtained when including electron inertia effects in the description of weak nonlinear plasma waves \cite{hu1972}. The KdV-Burgers equation has also been used in a study of wave propagation through liquid field elastic tube \cite{johnson1970} and for a description of shallow water waves on viscous fluid. This model can be thought of as a composition of the KdV and Burgers  equation,
\begin{equation}\label{e1-5'}
u_t-\delta u_{xx}+u_{xxx}+u^pu_x=0. 
\end{equation}
\vglue 0.2 cm
The equation \eqref{e1-5'} is one of the simplest evolution equations that features nonlinearity, dissipation, and dispersion. The special case 
$\delta = 0$ and $p=1$ is the classical Korteweg-de Vries (KdV) equation which arises in modeling many practical situations involving wave propagation in nonlinear dispersive media. In spite of having many works dealing with the KdV equation in the existing literature, the same cannot be asserted to the KdV–Burgers equation. This lack of results becomes more evident when we are interested in the controllability or in the asymptotic behaviour of its solutions. However, over the last years, a considerable number of stability issues concerning the KdV-B equation have received considerable attention.  In what concerns to the boundary control and stabilization problems, we refer \cite{ balogh2000,   bona1985,  gallegopazoto2015, gao2003, jia2016, jia2012, liu2002, naumkin1991, Sakthivel2009, smaoui2008} and references therein. Nonetheless, to the best of our knowledge and already noticed in  \cite{crepeau2010}, there are few results about controllability for the KdV-B equation. Recently, M. Chen in \cite{chen2016} gets the existence of time optimal control and the null controllability of the Korteweg-de Vries-Burgers equation posed in a bounded domain under effect of a control acting locally in a subset of the domain. 
\vglue 0.2cm
\noindent{\bf Theorem A} (\cite[Theorem 1.1]{chen2016})
{\em Let $I=(0,L)$ with $L>0$ and $\omega$ be nonempty open subset of $I$. Consider the KdV-B equation with external force $v$:
\begin{equation}\label{bang}
\begin{cases}
y_t-y_{xx}+y_{xxx}+yy_x=v\chi_{\omega}  & \text{in $I \times (0,+\infty)$} \\
y(0,t)=y(L,t)=y_x(0,t)=0 & \text{in $(0,+\infty)$}, \\
y(x,0)=y_0(x) & \text{in $I$}.
\end{cases}
\end{equation}
Then, For any $y_0 \in L^2(I)\setminus \{0\}$ and any $M>0$, there exist a time $T^*>0$ and a control $v^* \in L^2(I\times (0,T^*)$ such that the solution $y$ of \eqref{bang} satisfies $y(\cdot,T^*)=0$ and $\|v^*\|_{L^2(I\times (0,T^*))}=M.$}
\vglue 0.2cm
%Ravi Kumar \textit{et al} in \cite{kumar2012} are concerned with exact null controllability analysis with a memory term  of the Korteweg-de Vries-Burgers equation posed in a bounded domain. More precisely, they proved the following result
%\vglue 0.2cm
%\noindent{\bf Theorem B} (\cite[Theorem 2.2]{kumar2012})
%{\em Let $\Omega$ be an open bounded and connected subset of the boundary $\partial \Omega$ in class $C^2$, $\omega$ an arbitrary subset of $\Omega$. Let $T>0$ and consider the system 
%\begin{equation}\label{memory}
%\begin{cases}
%y_t-y_{xx}+ay+by+y_{xxx}+\int^t_0 k(t,\tau)y_xx(\tau,x)d\tau=\chi_{\omega} u + f  & \text{in $\Omega \times (0,T)$} \\
%y(x,t)=0 & \text{in $\partial \Omega \times (0,T)$}, \\
%y(x,0)=y_0(x) & \text{in $\Omega$}.
%\end{cases}
%\end{equation}
%where $a \in W^{1,\infty}(0,T;L^2(\Omega))$, $b \in L^{\infty}(Q)$.  
%$I=(0,L)$ with $L>0$ and $\omega$ be nonempty open subset of $I$. he kernel $k(t,\cdot)$ is smooth and has support in $(t_0, t_1)$ where $0 < t_0 < t_1 < T$. Then, the system \eqref{memory} is exactly null controllable for each $T>0$, if there exists $u \in L^2(Q)$ and $y''\in C([0,T],L^2(Q))\cap L^2(0,T;H^1(\Omega))$ such that 
%\begin{align*}
%y''(T,x)=0 \quad \text{a.e, $x \in \Omega$}, \qquad \|u\|_{L^2(Q)}\leq C\left( \|y_0\|_2+\|f_1\|_{L^2(Q)^2}\right).
%\end{align*}
%for some positive constant $C$.}
%\vglue 0.2cm
We are interested in the exact controllability results concerning a linearized Korteweg–de Vries Burgers equation posed in a \textit{unbounded domain}. In this direction, also there is not too many results in the literature, see for instance \cite{cannarsa2004}, \cite{burgosteresa2007}, \cite{micuzuazua2001}, \cite{miller2005} and  \cite{rosier2000}. In particular, Rosier in \cite{rosier2000} studies the exact boundary controllability of the linearized KdV equation on the unbounded domain $\Omega=(0,\infty),$ where the  control problem is discussed implicitly by considering the solution set without specifying the boundary conditions, such that the exact controllability does not hold for bounded energy solutions, i.e. for solutions in $L^{\infty}(0,T ;L^2(0,\infty))$. His main result reads as follows:
\vglue 0.2cm
\noindent{\bf Theorem B} (Rosier \cite[Thm 1.3]{rosier2000})
{\em Let $T, \varepsilon, b$ be positive numbers, with $\varepsilon < T$. Let $L^2(\Omega,e^{-2bx}dx)$ denote the space of (class of) measurable functions $u : \Omega \rightarrow  \R$ such that $\int_0^{\infty} u^2(x)e^{-2bx} dx < \infty$. Let $u_0 \in L^2(\Omega)$ and $u_T \in L^2(\Omega, e^{-2bx}dx)$.
Then, there exists a function $$u \in L^2_{loc}([0,T]\times [0,\infty]) \cap C([0,\varepsilon],L^2(\Omega))\cap C([T-\varepsilon,T],L^2(\Omega, e^{-2bx}dx)) $$ fulfilling}
\begin{equation*}
\begin{cases}
u_t+u_x+u_{xxx} & =0\quad \text{in $\DD'(\Omega \times (0,T))$} \\
u|_{t=0} &=u_0,\\
u|_{t=T} &=u_T.
\end{cases}
\end{equation*}
\vglue 0.2cm
\noindent In Theorem B, $u$ is locally square integrable. Actually, for a certain function $u_0$ in $L^2(0,\infty)$ and $u_T=0$ a trajectory $u$ as above cannot be found in $L^{\infty}(0,T,L^2(0,\infty))$ (see \cite[Theorem 1.2]{rosier2000}). It means that the bad behavior of the trajectories as $x\rightarrow\infty$ is the price to be paid for getting the exact controllability in the half space $\Omega$. In the whole space, the same sort of results occurs for the heat and Schrodinger equations.

\subsection*{Main Results}
In this work we treat the linear Cauchy problem associated to  Korteweg-de Vries Burgers equation \eqref{e1-5'},
\begin{equation}\label{e1-5}
\begin{cases}
u_t-u_{xx}+u_{xxx}=0  & \text{in $\R \times \R_+$} \\
u(x,0)=u_0(x) & \text{in $\R$}.
\end{cases}
\end{equation}
Our  results have affinities with the work of Rosier \cite{rosier2000} and we adapt his ideas in our problem. Such was mentioned by Rosier, his approach  applies also to many other linear PDEs for which the characteristic hyperplanes take the form $\{t = Const.\}$, for instance, the
heat equation $u_t-\Delta u=0$ and the Schrodinger equation $i u_t +\Delta u =0$. However, it is not the unique condition to obtain the controllability. In fact, in order to obtain the desired result for our problem,  the internal controllability for the model posed in a bounded domain with some appropriated boundary condition, plays a crucial role. In our case, we consider the initial boundary condition problem:
\begin{equation*}
\begin{cases}
u_t-u_{xx}+u_{xxx}=0  & \text{in $(-L,L) \times (0,T)$}, \\
u(-L,t)=u(L,t)=u_x(L,t)=0 & \text{in $(0,T)$}, \\
u(x,0)=u_0(x) & \text{in $(-L,L)$},
\end{cases}
\end{equation*}
and we prove that its solution $u$, satisfies the observability inequality 
\begin{equation}\label{new1}
\|u\|_{L^2(0,T,L^2(-L,L))} \leq C \|u\|_{L^2(0,T,L^2(\omega))}, \quad \omega=(-l,l),\quad l<L.
\end{equation}
To prove the above inequality, we follow the same approach as in \cite{capistrano2015} and \cite{glass2008}. We use a Carleman estimate and a bootstrap argument based on the smoothing effect of the KdV-Burgers equation, see Proposition \ref{observ} below. The main result in this paper reads as:
\begin{thm}\label{main}
Let $\{S(t)\}_{t\geq 0}$ denote the continuous semigroup on $L^2(\R)$ generated by the differential operator $A=\partial_x^2-\partial_x^3$ with domain $H^3(\R)$. Let $T, \varepsilon$ positive numbers, such that $\varepsilon < \frac{T}{2}$. Let $u_0, u_T \in L^2(\R)$. Then, there exists a function 
\begin{equation*}
u \in L^2_{loc}(\R^2) \cap  C([0,\varepsilon], L^2(\R))\cap C([T-\varepsilon,T], L^2(\R))
\end{equation*}
which solves
\begin{equation}\label{linearcontrol}
\begin{cases}
u_t-u_{xx}+u_{xxx}=0 & \text{in $\DD'(\R\times (0,T))$} \\
u(x,0) = u_0(x)      &  \text{in $\R$,} \\
u(x,T) =S(T)u_T(x)   &  \text{in $\R$.}\\
\end{cases}
\end{equation}
\end{thm}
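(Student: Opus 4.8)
The plan is to adapt Rosier's scheme from \cite{rosier2000}. One uses the semigroup $\{S(t)\}$ to pin down the trajectory near the two ends of $[0,T]$ — so that the whole content of the statement is concentrated on the open middle slab — and then obtains the required solution there as a limit of controlled solutions on the truncated domains $(-L,L)$, the control being furnished by \eqref{new1}. Concretely, set $g:=u_T-u_0$ and prescribe $u(t):=S(t)u_0$ on $[0,\varepsilon]$, $u(t):=S(t)u_T$ on $[T-\varepsilon,T]$; both pieces lie in $C([0,T],L^2(\R))$ and solve the equation, and this is in fact forced (by uniqueness for $\{S(t)\}$, and since $S(T)u_T=S(\varepsilon)S(T-\varepsilon)u_T$ lies in the range of $S(\varepsilon)$, so that $u(T-\varepsilon)=S(T-\varepsilon)u_T$). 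It then suffices to construct $\rho\in L^2_{loc}(\R\times(\varepsilon,T-\varepsilon))$ solving $\rho_t-\rho_{xx}+\rho_{xxx}=0$ in $\DD'$, with $\rho(t)\to0$ in $L^2_{loc}(\R)$ as $t\to\varepsilon^+$ and $\rho(T-\varepsilon)=S(T-\varepsilon)g$: putting $u:=S(\cdot)u_0+\rho$ on $[\varepsilon,T-\varepsilon]$, the three pieces agree at the junctions (because $\rho(\varepsilon)=0$ and $S(T-\varepsilon)u_0+S(T-\varepsilon)g=S(T-\varepsilon)u_T$), glue into an element of $L^2_{loc}(\R^2)\cap C([0,\varepsilon],L^2(\R))\cap C([T-\varepsilon,T],L^2(\R))$, and solve \eqref{linearcontrol} with $u(0)=u_0$, $u(T)=S(T)u_T$. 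After translating the slab to $\R\times(0,\tau)$, $\tau:=T-2\varepsilon>0$, and writing $h:=S(T-\varepsilon)g$, the task becomes: \emph{given $h\in L^2(\R)$, find $\rho\in L^2_{loc}(\R\times(0,\tau))$ solving the equation in $\DD'$ with $\rho\to0$ in $L^2_{loc}$ as $t\to0^+$ and $\rho(\tau)=h$.} Note that $h$ automatically lies in the smoother class $S(T-\varepsilon)L^2(\R)$, and necessarily so — a forward $L^2_{loc}$ solution is smooth at positive times — which is exactly why the theorem delivers $u(T)=S(T)u_T$ rather than an arbitrary final datum; this plays here, in dual spirit, the role of Rosier's device of enlarging the target space by a weight.

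The core is an \emph{approximate theorem}: for every $R>0$ and $\eta>0$ there is $\rho$ as above (global on $\R$, homogeneous, in $L^2_{loc}$, vanishing as $t\to0^+$) with merely $\|\rho(\tau)-h\|_{L^2(-R,R)}<\eta$. To prove it, fix $L\gg R$ and consider on $\Omega_L:=(-L,L)$ the controlled equation $y_t-y_{xx}+y_{xxx}=f\chi_{\omega_L}$ with $y(-L,\cdot)=y(L,\cdot)=y_x(L,\cdot)=0$, where $\omega_L$ is a subinterval of fixed length lying near the endpoint $x=L$. Under $(x,t)\mapsto(-x,\tau-t)$ the equation goes to its formal adjoint $\varphi_t+\varphi_{xx}+\varphi_{xxx}=0$, and \eqref{new1} (Proposition \ref{observ}, which holds — with a constant depending on it — for any location of the observed subinterval) becomes precisely the observability estimate for the corresponding adjoint control problem; the Hilbert Uniqueness Method then yields a solution $y^L$ with $y^L\equiv0$ on a short initial interval $[0,\delta]$, $y^L(\tau)=h|_{\Omega_L}$, and control $f^L$ supported in $\omega_L\times(\delta,\tau)$. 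Extend $y^L$ by $0$ to $\R\times(0,\tau)$. Any $\varphi\in C^\infty_c(\R\times(0,\tau))$ has $\supp\varphi\subset K\times(0,\tau)$ with $K$ compact, and for $L$ large $K$ misses both $\{|x|=L\}$ and $\omega_L$, so $y^L$ solves the \emph{homogeneous} equation against $\varphi$ — the jumps across $\{|x|=L\}$ and the forcing $f^L\chi_{\omega_L}$ have been carried off to $|x|=\infty$. Hence any weak-$L^2_{loc}$ cluster point $\rho$ of the family $(y^L)_L$ is a homogeneous $L^2_{loc}$ solution on $\R\times(0,\tau)$, vanishing near $t=0$, agreeing with $h$ on any prescribed compact; this is the approximate theorem. (Phrased dually via Hahn--Banach, this density amounts to showing that no nonzero compactly supported functional annihilates all such final data $\rho(\tau)$; testing also against the trajectories $S(\cdot)v$ reduces that to the injectivity of the Gaussian-type Fourier multiplier $S(\tau)^*$.)

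To pass from the approximate theorem to an exact solution one telescopes in the Mittag--Leffler spirit: with $R_k\uparrow\infty$ and $\eta_k\downarrow0$ chosen fast, use the approximate theorem to pick $\rho_1$ with $\|\rho_1(\tau)-h\|_{L^2(-R_1,R_1)}<\eta_1$, then $\rho_2$ for the residual $h-\rho_1(\tau)$ on $(-R_2,R_2)$, and so on; provided each correction $\rho_k$ can be taken small in $L^2_{loc}$ on the compacts already treated, the series $\sum_k\rho_k$ converges in $L^2_{loc}(\R\times(0,\tau))$ to a homogeneous $L^2_{loc}$ solution vanishing near $t=0$ with $\rho(\tau)=h$ now exact. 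Arranging $y^L$ — hence each $\rho_k$ — to be a semigroup trajectory on a short interval ending at $t=\tau$, and using the smoothing of $S$ together with $h\in S(T-\varepsilon)L^2(\R)$, gives the continuity of $\rho$ up to $t=\tau$ in $L^2_{loc}(\R)$ needed to glue $\rho$ into the $u$ of the first step; the remaining verifications ($u\in L^2_{loc}(\R^2)$, $u$ solves \eqref{linearcontrol}, $u(0)=u_0$, $u(T)=S(T)u_T$) are then routine. I expect the decisive difficulty — the technical heart — to be the $L$-uniform control of $y^L$ on a \emph{fixed} compact $K\times(0,\tau)$, equivalently the smallness of the corrections $\rho_k$ on previously treated compacts: the controllability cost of $\Omega_L$, i.e.\ the constant in \eqref{new1}, does \emph{not} stay bounded as $L\to\infty$, so the large part of $y^L$ must be confined near the control zone $x\approx L$ and one needs an estimate that only sees the solution far from that zone — to be extracted from the Carleman inequality behind \eqref{new1}, used now as a localized energy bound away from $\omega_L$, combined with the smoothing effect of the KdV--Burgers flow. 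By comparison, the semigroup reduction and the HUM duality applied to \eqref{new1} are the soft parts.
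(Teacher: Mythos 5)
Your reduction of the problem to the middle slab is essentially the paper's first step (the paper implements it with a smooth cutoff $\varphi(t)$, writing $u=\varphi u_1+(1-\varphi)u_2+w$ so that $w$ solves the \emph{inhomogeneous} equation with right-hand side $\varphi'(u_2-u_1)$ supported in a time slab and $w|_{t=0}=w|_{t=T}=0$; your additive ansatz $u=S(\cdot)u_0+\rho$ with a homogeneous $\rho$ vanishing at one end and hitting $S(T-\varepsilon)(u_T-u_0)$ at the other is an equivalent reformulation). The divergence — and the gap — is in how you pass to the unbounded domain. Your plan is HUM on $(-L,L)$ with controls near $x=L$, extension by zero, and a weak-$L^2_{loc}$ cluster point of the family $(y^L)_L$. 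As you yourself note, this requires a bound on $y^L$ over a fixed compact $K\times(0,\tau)$ that is uniform in $L$, while the observability constant in \eqref{new1} blows up as $L\to\infty$; you defer this to an unspecified ``localized'' consequence of the Carleman estimate. That estimate is not produced, and without it the cluster-point step and the subsequent telescoping (where each correction $\rho_k$ must simultaneously fix the residual on $(-R_k,R_k)$ and be small on $(-R_{k-1},R_{k-1})$ — two requirements in tension with no mechanism to reconcile them) both collapse. This is not a routine technicality: it is precisely the obstruction that forces a different architecture.

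The paper (following Rosier's adaptation of Rosay's proof of Malgrange--Ehrenpreis) never takes a limit of controlled solutions and never needs uniformity of the controllability cost in $L$. Instead, Proposition \ref{propmain} builds the solution \emph{iteratively}: $u_{n+1}$ on $(-n-1,n+1)$ is obtained from $u_n$ on $(-n,n)$ by (i) solving the inhomogeneous equation exactly on the larger interval via the global Carleman estimate (Proposition \ref{corocarl}, whose constant may depend on $n$ — it is used once per step), and (ii) invoking a Runge-type approximation theorem (Theorem \ref{aprothm}) to approximate the homogeneous difference $u_n-w$ on $(-n+1,n-1)$ to within a \emph{prescribed} tolerance $2^{-n-1}$ by a homogeneous solution on the larger interval with the right time-support. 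The Cauchy property in $L^2_{loc}$ is thus imposed by construction rather than extracted from uniform bounds; the observability inequality \eqref{observ1} enters only inside the proof of the approximation theorem on a fixed pair of nested domains (together with Lemma \ref{approlema} and Holmgren uniqueness), not as the engine of a HUM scheme on $(-L,L)$. To repair your argument you would either have to prove the $L$-uniform interior estimate you postulate, or replace the weak-limit/HUM core by this iterative approximation scheme.
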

%The above result also holds true in the half space $(0,\infty)$. However, at difference with the case of the KdV, we do not known anything about the null controllability provides that the trajectory belongs to $L^{\infty}(0,T;L^2(0,\infty))$, i.e, we do not have answer to the question: There exist $u_0 \in L^2(0,\infty)$ and a trajectory $u \in L^{\infty}(0,T;L^2(0,\infty))$ such that $u_t-u_{xx}+u_{xxx}=0$ in $\DD'( (0,\infty) \times [0,T])$ and $u(x,0)=u_0(x)$, then $u(x,T)\neq 0$?. For the case Linear KdV, the answer is positive \cite[Theorem 1.2]{rosier2000}, it means, that that even the (boundary) null-controllability fails to be true for solutions with bounded energy. 

The proof of the Theorem \eqref{main} is  based in \cite{rosier2000}, it combines Fursikov–Imanuvilov’s approach \cite{fursikovimanuvilov1995} for the boundary controllability of the Burgers equation on bounded domains, which is based on a global Carleman’s estimate. In order to obtain the extension to some unbounded domain, we follow Rosay’s clever proof of Malgrange–Ehrenpreis’s theorem \cite{rosay1991}, which uses an approximation theorem. The proof of the approximation theorem is based in two technical results, namely, Proposition \ref{observ} and Lemma \ref{approlema} below. The Proposition \ref{observ} refers to the internal observaility property  \eqref{new1}.

Since the semigroup $S(\cdot)$ associated to KdV-Burgers is not a group in $L^2(\R)$, the proof of the Theorem \ref{main} does not give us the exact controllability directly in the whole space, i.e, we set that the solution $u$ of the Cauchy problem \eqref{e1-5}, satisfies $u(T)=S(T)u_T$, for any $u_0$ and $u_T$ in $L^2(\R)$. However, with some minor modifications in the proof of Theorem \ref{main} as in \cite{rosier2000}, we obtain a exact controllability result in the half-space, provided that $u_0 \in L^2(0,+\infty)$ and $u_T \in L^2((0,+\infty), e^{-2bx}dx)$ for $b\geq \frac13$, namely:
\begin{thm}\label{main2}
Let $T, \varepsilon, b$ be positive numbers, with $\varepsilon < T$ and $b \geq \frac13$.  Let $u_0 \in L^2((0,+\infty))$ and $u_T \in L^2((0,+\infty), e^{-2bx}dx)$.
Then there exists a function $$u \in L^2_{loc}([0,T]\times (0,+\infty)) \cap C([0,\varepsilon],L^2((0,+\infty)))\cap C([T-\varepsilon,T],L^2((0,+\infty), e^{-2bx}dx)) $$ fulfilling
\begin{equation*}
\begin{cases}
u_t-u_{xx}+u_{xxx} & =0\quad \text{in $\DD'((0,+\infty) \times (0,T))$} \\
u|_{t=0} &=u_0,\\
u|_{t=T} &=u_T.
\end{cases}
\end{equation*}
\end{thm}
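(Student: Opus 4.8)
The plan is to repeat the proof of Theorem \ref{main} almost verbatim, the line $\R$ being replaced by the half line $(0,+\infty)$, the symmetric exhausting intervals $(-L,L)$ by bounded intervals exhausting $(0,+\infty)$ from within (so that, as in \cite{rosier2000}, no boundary condition is prescribed on the solution on $(0,+\infty)$), and the weight $e^{-2bx}$ being absorbed into the equation through the change of unknown $u=e^{bx}v$. A direct computation gives that $u$ solves $u_t-u_{xx}+u_{xxx}=0$ in $\DD'((0,+\infty)\times(0,T))$ if and only if $v$ solves
\begin{equation*}
P_b v := v_t + v_{xxx} + (3b-1)v_{xx} + (3b^2-2b)v_x + (b^3-b^2)v = 0 ,
\end{equation*}
and, since $\norm{e^{bx}v}_{L^2((0,+\infty),\,e^{-2bx}dx)}=\norm{v}_{L^2(0,+\infty)}$, the hypothesis $u_T\in L^2((0,+\infty),e^{-2bx}dx)$ is equivalent to $e^{-bx}u_T\in L^2(0,+\infty)$, and $u\in C([T-\varepsilon,T],L^2((0,+\infty),e^{-2bx}dx))$ to $v\in C([T-\varepsilon,T],L^2(0,+\infty))$. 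Thus the statement is equivalent to an \emph{unweighted} controllability result on $(0,+\infty)$ for the operator $P_b$, of exactly the type proved in Theorem \ref{main}, and I would now reproduce its three ingredients: the Carleman estimate / internal observability \eqref{new1} of Proposition \ref{observ}, the approximation theorem, and Rosay's functional-analytic Lemma \ref{approlema}.

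Concretely, following Rosier, write $u=S(t)u_0+e^{bx}v$, where $S(t)u_0$ (defined by extending $u_0$ by $0$ to $\R$ and restricting to $(0,+\infty)$) is the free forward evolution, which is parabolic and hence lies in $C([0,T],L^2(0,+\infty))$; one then needs $v$ solving $P_b v=0$ in $\DD'((0,+\infty)\times(0,T))$, vanishing for $t<\varepsilon$, continuous into $L^2(0,+\infty)$ near $t=T$ with $v(\cdot,T)=e^{-bx}(u_T-S(T)u_0)$, and merely $L^2_{loc}$ in between. Near $t=T$, $v$ is obtained by evolving $e^{-bx}(u_T-S(T)u_0)$ \emph{backwards} from $t=T$: this is the sole place where $b\ge\frac13$ is used, because in the variable $s=T-t$ the equation becomes $v_s=v_{xxx}+(3b-1)v_{xx}+(3b^2-2b)v_x+(b^3-b^2)v$, whose second-order coefficient $3b-1$ carries the parabolic (smoothing) sign precisely when $b\ge\frac13$ — for $b=\frac13$ the second-order term drops out and $P_{1/3}$ is purely dispersive, well posed in both time directions, while $b>\frac13$ makes the backward problem smoothing; in all these cases one gets $v\in C([T-\varepsilon,T],L^2(0,+\infty))$, whereas $b<\frac13$ would render the backward problem anti-parabolic, which is exactly why $\frac13$ is the threshold. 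The connecting part of $v$ on $(\varepsilon,T-\varepsilon)$, which is only required to be $L^2_{loc}$ and may grow in $x$, is then produced as in Theorem \ref{main}: one exhausts $(0,+\infty)$ by the bounded intervals above, applies the internal observability \eqref{new1} for $P_b$ on each of them, and iterates through Lemma \ref{approlema}; undoing $u=e^{bx}v$ yields $u$ in the stated space, with $u(\cdot,0)=u_0$ and $u(\cdot,T)=S(T)u_0+(u_T-S(T)u_0)=u_T$.

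The step I expect to be the real obstacle is the Carleman estimate for the conjugated operator $P_b$: one must check that the internal observability of Proposition \ref{observ}, established there for $\partial_t-\partial_x^2+\partial_x^3$, persists for $P_b$ with a constant that does not degenerate as the bounded intervals exhaust $(0,+\infty)$, i.e. that the extra second-order term $(3b-1)\partial_x^2$ — together with the first- and zeroth-order terms of $P_b$ — is genuinely dominated by the weighted third-order estimate for the principal part $\partial_t+\partial_x^3$. This goes through because $b$ is a \emph{fixed} parameter, so the Carleman parameter may be chosen large in terms of $b$; granting this together with the backward well-posedness near $t=T$ for $b\ge\frac13$, the rest of the argument is word for word that of Theorem \ref{main} and of \cite[Theorem 1.3]{rosier2000}.
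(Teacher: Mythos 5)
Your overall three-piece decomposition (forward evolution of $u_0$, backward branch hitting $u_T$, and an $L^2_{loc}$ correction supported away from $t=0,T$) is the right skeleton, and your identification of the threshold $b\geq\tfrac13$ is exactly the correct one: conjugating by $e^{bx}$ produces the second-order coefficient $3b-1$, whose sign governs whether the terminal data can be propagated backwards; this is precisely the content of the generation result the paper invokes from \cite[Lemma 2.1]{pazoto2010}. The paper, however, packages the backward branch differently and more economically: it does not conjugate the equation at all. It takes $u_2$ to be the \emph{forward} evolution under $B=-\partial_x^2-\partial_x^3$ (a contraction semigroup on the weighted space $L^2(e^{2bx}dx)$ for $b\geq\tfrac13$) of the reflected datum $u_T(-x)$ supported on $x<0$, and then sets $\widetilde u_2(x,t)=u_2(-x,T-t)$, which solves the original equation with $\widetilde u_2(\cdot,T)=u_T$. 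Gluing $u=\varphi u_1+(1-\varphi)\widetilde u_2+\widetilde w$ with the cutoff \eqref{cuttoff1} reduces everything to Proposition \ref{propmain} for the \emph{original} operator with $f=\varphi'(\widetilde u_2-u_1)$, and one restricts the resulting whole-line solution to $x>0$.

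This is where your proposal has a real problem: the step you flag as ``the real obstacle'' --- redoing the Carleman estimate, Proposition \ref{observ}, and the approximation theorem for the conjugated operator $P_b$ --- is left entirely unexecuted, and it is also unnecessary. The correction term lives where $\varphi'\neq 0$, i.e.\ strictly inside $(0,T)$, and is only required to be $L^2_{loc}$; since $e^{\pm bx}$ is locally bounded, the weight is irrelevant there, so the existing Proposition \ref{propmain} (for $\partial_t-\partial_x^2+\partial_x^3$ on $\R^2$) applies verbatim. The weighted spaces enter only through the branches $u_1$ and $\widetilde u_2$, which carry the continuity statements near $t=0$ and $t=T$. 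Two further small points: your concern that the observability constant must not degenerate as the intervals exhaust the half-line is misplaced --- in the iteration of Claim \ref{claim3} the constant at step $n$ may depend on $n$, since one only needs \emph{some} approximation with error $2^{-n-1}$ at each step; and on the half-line the backward evolution of $e^{-bx}(u_T-S(T)u_0)$ is not well defined without boundary conditions at $x=0$, which is exactly why the paper reflects to the whole line first and restricts at the end. If you replace your conjugation-plus-new-Carleman plan by the reflection construction of $\widetilde u_2$ and reuse Proposition \ref{propmain} as is, your argument closes.
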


\vglue 0.4cm

The paper is outlined as follows:
\vglue 0.4cm
 - In section \ref{internatlcontrollability}, we present an internal observability inequality for some appropriate initial value problem of the KdV-B equation posed on a finite interval, which will be used to prove an approximation theorem in order to obtain our main result. 
 \vglue 0.2cm
 - Section \ref{proofmainresult} is devoted to the proof of the Theorem \ref{main}. 
 \vglue 0.2cm
 - Section \ref{furthercoments} contains some further comments and related problems of controllability. 
   \vglue 0.2cm
 - Finally,  in the Appendix \ref{appendix}, we establish some  auxiliary results.

%%%%%%%%%%%%%%%%%%%%%%%%%%%%%%%%%%%%%%%%%%%%%%%%%%%%%%%%%%%%%%%%%%%%%%%%%%%%%%%%%%%%%%%%%%%%%%%%%%%%%%%%
\section{Internal Observability}\label{internatlcontrollability}
%%%%%%%%%%%%%%%%%%%%%%%%%%%%%%%%%%%%%%%%%%%%%%%%%%%%%%%%%%%%%%%%%%%%%%%%%%%%%%%%%%%%%%%%%%%%%%%%%%%%%%%%

In this section, we follow the same approach as in \cite{capistrano2015} to prove a observability inequality for the linear KdV-Burgers equation posed in a bounded domain. Consider the differential operator
\begin{equation}\label{A}
A=:\partial_{xx}-\partial_{xxx}, \quad D(A):= \left\lbrace u \in  H^3(-L,L): u(-L)=u(L)=u_x(L)=0 \right\rbrace.
\end{equation}
\begin{prop}
The operator $A$ and its adjoin $A^*$ are dissipative in $L^2(-L,L)$.
\end{prop}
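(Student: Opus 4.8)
The plan is to verify the defining inequality $\langle Aw,w\rangle_{L^2(-L,L)}\le 0$ directly on the appropriate domain (in the real space $L^2(-L,L)$; otherwise one takes real parts), the one substantive step being the correct identification of $A^*$ together with its domain. I would begin with $A$ itself. For $u\in D(A)$, integrating by parts and discarding the boundary contributions killed by $u(-L)=u(L)=0$ and $u_x(L)=0$, one finds $\int_{-L}^{L}u_{xx}u\,dx=-\int_{-L}^{L}u_x^2\,dx$ (the bracket $[u_xu]$ vanishes by the Dirichlet data) and $\int_{-L}^{L}u_{xxx}u\,dx=\tfrac12 u_x(-L)^2$ (the bracket $[u_{xx}u]$ vanishes, and $\tfrac12[u_x^2]_{-L}^{L}=-\tfrac12 u_x(-L)^2$ since $u_x(L)=0$). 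Combining these gives
\begin{equation*}
\langle Au,u\rangle=\int_{-L}^{L}(u_{xx}-u_{xxx})\,u\,dx=-\int_{-L}^{L}u_x^2\,dx-\tfrac12\,u_x(-L)^2\le 0,
\end{equation*}
so $A$ is dissipative.

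The heart of the argument is the computation of $A^*$. Starting from $\langle Au,v\rangle=\int_{-L}^{L}(u_{xx}-u_{xxx})v\,dx$ and integrating by parts three times so as to transfer every derivative onto $v$, I expect to obtain the formal adjoint $A^*v=v_{xx}+v_{xxx}$ (the odd-order term changes sign) plus a collection of boundary brackets. After inserting the conditions defining $D(A)$, the surviving boundary contribution is a linear combination of the \emph{free} traces $u_x(-L)$, $u_{xx}(-L)$, $u_{xx}(L)$ with coefficients $-(v(-L)+v_x(-L))$, $v(-L)$, and $-v(L)$ respectively. Requiring this to vanish for \emph{every} $u\in D(A)$ forces each coefficient to vanish, and hence the ``reflected'' boundary conditions
\begin{equation*}
D(A^*)=\set{v\in H^3(-L,L):\ v(-L)=v(L)=0,\ v_x(-L)=0}.
\end{equation*}
This identification is the main obstacle: one must bookkeep all five boundary brackets carefully and resist simply transcribing the conditions of $D(A)$, which would place the Neumann condition at $x=L$ instead of $x=-L$ and reverse the sign of the boundary term in the dissipativity estimate below.

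Finally I would verify dissipativity of $A^*$ in the same manner. For $v\in D(A^*)$, the conditions $v(\pm L)=0$ and $v_x(-L)=0$ again annihilate the relevant brackets, yielding $\int_{-L}^{L}v_{xx}v\,dx=-\int_{-L}^{L}v_x^2\,dx$ and $\int_{-L}^{L}v_{xxx}v\,dx=-\tfrac12 v_x(L)^2$ (now $\tfrac12[v_x^2]_{-L}^{L}=\tfrac12 v_x(L)^2$ because $v_x(-L)=0$), so that
\begin{equation*}
\langle A^*v,v\rangle=\int_{-L}^{L}(v_{xx}+v_{xxx})\,v\,dx=-\int_{-L}^{L}v_x^2\,dx-\tfrac12\,v_x(L)^2\le 0.
\end{equation*}
The sign of the boundary term comes out right precisely because the Dirichlet--Neumann pair for $A^*$ sits at $x=-L$ while the single Dirichlet condition sits at $x=L$, mirroring the situation for $A$; this is exactly the point at which the domain computed in the previous paragraph is indispensable, and it establishes that both $A$ and $A^*$ are dissipative.
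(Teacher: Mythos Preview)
Your proof is correct and follows essentially the same approach as the paper: integrate by parts using the boundary conditions to obtain $\langle Au,u\rangle=-\int u_x^2-\tfrac12 u_x(-L)^2\le 0$, identify $A^*=\partial_{xx}+\partial_{xxx}$ on $D(A^*)=\{v\in H^3(-L,L):v(\pm L)=v_x(-L)=0\}$, and argue analogously for $A^*$. Your write-up is in fact more thorough than the paper's, which simply asserts the form of $A^*$ and $D(A^*)$ and then says ``Analogously, $A^*$ is also a dissipative operator''; you supply the boundary-term bookkeeping that actually pins down $D(A^*)$ and carry out the $A^*$ computation explicitly.
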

\begin{proof}
It is easy to see that $A^*$ is given by 
\begin{equation}\label{A*}
A^*=\partial_{xx}+\partial_{xxx}, \quad D(A^*):= \left\lbrace \varphi \in  H^3(-L,L): \varphi(-L)=\varphi(L)=\varphi_x(-L)=0 \right\rbrace.
\end{equation}
Let $u \in D(A)$, hence, 
\begin{align*}
(Au,u)_{L^2} &= \int_{-L}^{L}u_{xx}udx-\int_{-L}^{L}u_{xxx}udx = -\int_{-L}^{L}u_{x}^2dx+\int_{-L}^{L}u_{xx}u_{x}dx  \\
&= -\int_{-L}^{L}u_{x}^2dx -\frac{1}{2}u_x^2(-L)\leq 0. 
\end{align*}
Then, $A$ is a dissipative operator in $L^2(-L,L)$. Analogously, $A^*$ is also a dissipative operator in $L^2(-L,L)$.
\end{proof}

The above Proposition together with the density property of the domains $D(A)$ and $D(A^*)$ in $L^2(-L,L)$ and the closeness of the operator $A$ ($A=A^{**}$), allow us to conclude that $A$ generates a $C_0$ semigroup of contractions $\{S_L(t)\}_{t \geq 0}$ on $L^2(-L,L)$ (See \cite{pazy}) which be denoted by $S_L(\cdot)$. 
Classical existence results then give us the global well-posedness in the space $L^2(-L,L)$.
\begin{thm}\label{homog} 
Let  $u_0\in L^2(-L,L)$ and consider the initial boundary value problem 
\begin{equation}\label{ibvp}
\begin{cases}
u_t =Au & \text{in $(0,T)\times (-L,L)$,} \\
u(0,x)=u_0(x) & \text{in $(-L,L)$.}
\end{cases}
\end{equation}
Then, there exists a unique (weak) solution $u=S_L(\cdot)u_0$ of \eqref{ibvp} such that
\begin{equation*}
u \in C\left([0,T];L^2(-L,L)\right)\cap H^1\left(0,T; (H^{-2}(0,L))^2\right).
\end{equation*}
Moreover, if $u_0 \in D(A)$ then \eqref{ibvp} has a  unique (classical) solution $u$ such that
\begin{equation*}
u \in C([0,T];D(A))\cap C^1((0,T);L^2(-L,L)).
\end{equation*}
\end{thm}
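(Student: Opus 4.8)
The plan is to deduce the whole statement from the fact, just established, that $A$ generates a $C_0$-semigroup of contractions $\{S_L(t)\}_{t\geq0}$ on $L^2(-L,L)$, and then to upgrade the regularity of the resulting solution by an energy argument based on the dissipativity identity computed above. First I would set $u(t):=S_L(t)u_0$ and invoke the standard semigroup theory of \cite{pazy}: when $u_0\in D(A)$ this $u$ belongs to $C([0,T];D(A))\cap C^1([0,T];L^2(-L,L))$ and satisfies $u_t=Au$ in $L^2(-L,L)$ for every $t$, so it is a classical solution, unique in that class; when merely $u_0\in L^2(-L,L)$, $u\in C([0,T];L^2(-L,L))$ is the mild solution, it is unique, and for any sequence $u_0^n\in D(A)$ with $u_0^n\to u_0$ in $L^2(-L,L)$ the solutions $u^n:=S_L(\cdot)u_0^n$ converge to $u$ in $C([0,T];L^2(-L,L))$, since $\norm{u^n(t)-u(t)}_{L^2}\leq\norm{u_0^n-u_0}_{L^2}$.

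Next I would establish the hidden $H^1$-regularity in $x$. For $u_0\in D(A)$, multiplying the equation by $u$ and integrating over $(-L,L)$, the computation in the Proposition above gives $(Au,u)_{L^2}=-\norm{u_x}_{L^2}^2-\frac{1}{2}u_x^2(-L)$, so that for all $t\in[0,T]$
\begin{equation*}
\norm{u(t)}_{L^2}^2+2\int_0^t\norm{u_x(s)}_{L^2}^2\,ds+\int_0^t u_x^2(-L,s)\,ds=\norm{u_0}_{L^2}^2 .
\end{equation*}
In particular $\norm{u}_{L^2(0,T;H^1(-L,L))}\leq C\norm{u_0}_{L^2(-L,L)}$, and since $u(\pm L,t)=0$ one has $u(\cdot,t)\in H^1_0(-L,L)$ for a.e.\ $t$. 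Passing to the limit along the approximating sequence above, using weak compactness in $L^2(0,T;H^1(-L,L))$ and the lower semicontinuity of the norm, the same bound and the membership $u\in L^2(0,T;H^1_0(-L,L))$ persist for every $u_0\in L^2(-L,L)$.

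The time regularity then follows: writing $u_t=u_{xx}-u_{xxx}=\partial_x(u_x)-\partial_{xx}(u_x)$ with $u_x\in L^2(0,T;L^2(-L,L))$ shows $u_t\in L^2(0,T;H^{-2}(-L,L))$ with norm bounded by $C\norm{u_0}_{L^2}$; together with $u\in C([0,T];L^2(-L,L))\hookrightarrow L^2(0,T;H^{-2}(-L,L))$ this is precisely the asserted $u\in H^1(0,T;H^{-2}(-L,L))$. Uniqueness of the weak solution in $C([0,T];L^2(-L,L))$ is contained in the uniqueness of mild solutions; alternatively, a weak solution with $u_0=0$ lies, by the two preceding paragraphs, in $L^2(0,T;H^1_0(-L,L))\cap H^1(0,T;H^{-2}(-L,L))$, and a regularization in time makes the energy identity available with $u_0=0$, forcing $u\equiv0$.

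The only genuinely delicate point is the second paragraph: justifying the energy identity at the level of mild (rather than classical) solutions — where the pairing $\langle u_t,u\rangle$ is not directly meaningful — and propagating the resulting $L^2_tH^1_x$ bound to all $u_0\in L^2(-L,L)$ by density; once that a priori estimate is in hand, the time regularity and uniqueness are routine. It is worth noting that, in contrast to the pure KdV equation, no Kato-type weighted multiplier is needed here, since the Burgers dissipation $-u_{xx}$ already supplies the gain of one spatial derivative (in the $L^2_t$-averaged sense).
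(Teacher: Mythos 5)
Your proposal is correct and follows essentially the same route the paper takes: the paper offers no written proof of this theorem, simply invoking the generation of the contraction semigroup $S_L(\cdot)$ and ``classical existence results'' from \cite{pazy}, which is exactly your first paragraph. The additional energy argument you give --- proving $\|u(t)\|_{L^2}^2+2\int_0^t\|u_x\|_{L^2}^2\,ds+\int_0^t u_x^2(-L,s)\,ds=\|u_0\|_{L^2}^2$ for $u_0\in D(A)$, passing to the limit by density and weak compactness, and then reading off $u_t=\partial_x(u_x)-\partial_x^2(u_x)\in L^2(0,T;H^{-2})$ --- is sound and correctly supplies the one piece of the statement (the $H^1(0,T;H^{-2})$ regularity) that semigroup theory alone does not give and that the paper leaves implicit.
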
 
In general, the following observability inequality plays a fundamental role for the study of the controllability properties. In this case, it will be used to prove an approximation theorem stated in the next section, being a crucial key to obtain the desired result. 
\begin{prop}\label{observ}
Let $l, L, T$ be positive numbers such that $l<L$. Then there exists a
constant $C > 0$ such that, for every $u_0 \in L^2(-L, L)$, the solution of \eqref{ibvp} satisfies 
\begin{equation}\label{observ1}
\|u\|_{L^2(0,T,L^2(-L,L))} \leq C \|u\|_{L^2(0,T,L^2(\omega))}, \quad \omega=(-l,l).
\end{equation}
for some $C>0$. 
\end{prop}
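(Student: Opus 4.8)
The plan is to establish the observability inequality \eqref{observ1} in two stages: first prove a Carleman-type estimate for solutions of \eqref{ibvp}, which yields a weaker observability inequality involving higher-order norms on the right-hand side, and then upgrade it to the clean $L^2$-estimate by a bootstrap argument based on the parabolic-type smoothing effect of the KdV--Burgers semigroup $S_L(\cdot)$. Concretely, I would fix a cutoff region $\omega_0 = (-l_0, l_0)$ with $l < l_0 < L$ and choose a weight function of the form $\varphi(x,t) = \theta(t)\psi(x)$, where $\theta(t) = \left(t(T-t)\right)^{-1}$ blows up at $t=0$ and $t=T$, and $\psi$ is a smooth positive function on $[-L,L]$ whose derivative $\psi_x$ does not vanish outside $\omega_0$ and which is adapted to the boundary conditions $u(-L)=u(L)=u_x(L)=0$. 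Because the principal part of the operator combines the third-order dispersive term $u_{xxx}$ and the second-order dissipative term $u_{xx}$, the Carleman computation splits accordingly: the $\partial_x^3$ part is handled as in the KdV Carleman estimates of \cite{capistrano2015,glass2008} (after conjugating by $e^{s\varphi}$ and separating into formally self-adjoint and skew-adjoint parts), while the $\partial_x^2$ part contributes favorably signed terms just as in the classical heat-equation Carleman estimate of Fursikov--Imanuvilov \cite{fursikovimanuvilov1995}. The outcome should be an inequality of the type
\begin{equation*}
\int_0^T\!\!\int_{-L}^{L} e^{-2s\varphi}\Bigl( s\theta\, u_x^2 + s^3\theta^3 u^2 \Bigr)\,dx\,dt \;\leq\; C \int_0^T\!\!\int_{\omega_0} e^{-2s\varphi}\, s^3\theta^3 u^2\,dx\,dt,
\end{equation*}
valid for all $s$ large enough, with the boundary terms at $x=\pm L$ having the correct sign thanks to the choice of $\psi$ and the dissipativity computations $(Au,u)_{L^2}\le 0$ already recorded above.

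From this Carleman estimate, restricting the time integral to a compact subinterval $[T/4, 3T/4]$ (say) where $e^{-2s\varphi}$ is bounded above and below, and using $u \in C([0,T];L^2(-L,L))$ together with the contraction property $\|u(t)\|_{L^2(-L,L)}$ nonincreasing in $t$, I obtain
\begin{equation*}
\|u\|_{L^2(T/4,\,3T/4;\,L^2(-L,L))} \;\leq\; C\, \|u\|_{L^2(0,T;\,L^2(\omega_0))}.
\end{equation*}
The left-hand side controls $\|u(3T/4)\|_{L^2(-L,L)}$, hence by the contraction property it controls $\|u(t)\|_{L^2(-L,L)}$ for all $t \in [3T/4, T]$, and similarly a uniform-in-time bound propagates backward after shifting the time origin and reusing the estimate on a translated interval; iterating this on overlapping subintervals covering $[0,T]$ yields $\|u\|_{L^2(0,T;L^2(-L,L))} \le C\|u\|_{L^2(0,T;L^2(\omega_0))}$.

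The remaining task is to replace the observation region $\omega_0$ by the strictly smaller $\omega = (-l,l)$, and this is exactly where the smoothing effect enters. I would take a cutoff $\eta \in C_c^\infty(\omega_0)$ with $\eta \equiv 1$ on $\omega$, and show that $w = \eta u$, or rather a localized energy of $u$ on $\omega_0$, satisfies a differential inequality that, integrated in $t$, bounds $\|u\|_{L^2(0,T;L^2(\omega_0))}$ by $\|u\|_{L^2(0,T;H^{-1}(\omega))}$ or a similar negative-order norm supported in $\omega$, and then absorb the gain of derivatives using the parabolic smoothing $u(t) \in H^s$ for $t>0$. This is the bootstrap step carried out in \cite{capistrano2015,glass2008} for the KdV equation, and the dissipative term here only helps. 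The main obstacle, and the place requiring the most care, is the Carleman estimate itself: one must verify that the weight $\psi$ can be chosen simultaneously compatible with the mixed second/third order operator \emph{and} with the asymmetric boundary conditions (two conditions at $x=L$, one at $x=-L$), so that \emph{all} boundary contributions generated by the integrations by parts — in particular the terms at $x=-L$ where $u_x$ is not prescribed — carry the right sign or can be absorbed; getting the interplay of the powers of $s$ and $\theta$ right for both the $\partial_x^2$ and $\partial_x^3$ contributions is the technical heart of the argument.
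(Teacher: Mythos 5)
Your overall strategy --- a Carleman estimate with weight $\theta(t)\psi(x)$, $\theta=(t(T-t))^{-1}$, followed by a smoothing-based bootstrap and an energy/contraction argument --- is the same as the paper's, but two steps would not go through as written.

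First, the claimed output of the Carleman computation is too strong, and this hides the technical core of the proof. For $\partial_t-\partial_x^2+\partial_x^3$ the third-order term dictates the weights, so the estimate carries $s^5\theta^5|u|^2+s^3\theta^3|u_x|^2+s\theta|u_{xx}|^2$ rather than the heat-type $s^3\theta^3|u|^2+s\theta|u_x|^2$; more importantly, the localized right-hand side that the computation actually produces contains $|u_x|^2$ and $|u_{xx}|^2$ over the observation region as well --- this is exactly the form of Lemma \ref{porpcarl}. Reducing that local term to $\|u\|_{L^2(\omega)}^2$ alone is the hard part: local integrations by parts trade the derivative terms for an $H^{8/3}(\omega)$-norm of $u$ (cf.\ \eqref{g6}), which is then absorbed by a three-step bootstrap $u_i=\theta_i(t)u$ resting on the interpolation/smoothing Lemma \ref{gain} for the nonhomogeneous problem (Lemma \ref{lemobs}). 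You do invoke a smoothing bootstrap, but you deploy it for a different purpose --- shrinking $\omega_0$ to $\omega$, which is unnecessary since the weight $\psi$ can be built directly for $\omega$ --- while the display you assert as the ``outcome'' of the Carleman step silently assumes the derivative terms have already been removed. That removal is the missing idea.

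Second, the time-endpoint argument fails. Because the weight degenerates at $t=0$ and $t=T$, the Carleman estimate only controls $\int_{T/4}^{3T/4}\|u(t)\|^2\,dt$. The contraction property propagates this control \emph{forward} in time (handling $t\ge 3T/4$), but ``propagating backward by shifting the time origin and reusing the estimate on a translated interval'' cannot reach $t=0$: every reapplication on $[0,T_k]$ with $T_k\downarrow 0$ again loses a neighbourhood of $t=0$ and comes with a constant that blows up as $T_k\to 0$, so the pieces are not summable. The paper instead closes the argument through the space--time reversed variable $v(x,t)=u(L-x,T-t)$, whose $L^2$-norm is nondecreasing, combined with the a priori bound $\|u\|^2_{L^2(0,T;L^2(-L,L))}\le T\|u_0\|^2_{L^2(-L,L)}$; you need some substitute for this step, and it deserves care, since for a smoothing (non-reversible) semigroup controlling $u$ near $t=0$ is precisely where such an inequality can fail.
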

The proof of the Proposition \ref{observ} was motivated by the works \cite{capistrano2015} and \cite{glass2008}. Following the methods developed in above papers, we  prove the internal observability \eqref{observ1} by using a Carleman estimative. Before to present the proof of Proposition \ref{observ}, we establish some preliminary results.
\vglue 0.4cm
\subsection*{Carleman Estimate for the KdV-Burgers equation}
In order to prove the internal observability for the KdV-Burgers equation, we  follow closely the ideas present in \cite{capistrano2015}. In such work, the authors establishes a internal Carlmenan estimate for the non-homogeneous system:
\begin{equation}\label{kdv}
\begin{cases}
q_t + q_{xxx}=f & \text{in $ (0,L) \times (0,T),$} \\
q(0,t)=q(L,t)=q_x(L,t)=0 & \text{in $(0,T),$} \\
q(x,0)=q_0(x) & \text{in $(0,L)$}.
\end{cases}
\end{equation}
where $f \in L^2(0,T;L^2(0,L))$. Note that apriori, the solution $q$ of \eqref{kdv} does not have  regularity enough to apply the Carleman estimate present in \cite[Proposition 3.1]{capistrano2015} with $f=q_{xx}$.  Hence, to get the desired Carleman estimate, we assume that $\omega = (l_1, l_2)$ with $-L < l_1 < l_2 < L$ and pick any function $\psi \in C^3([-L, L])$ with 
\begin{align}
& \text{$\psi > 0$ in $[-L,L],$}  \label{carl1}\\
& \text{$|\psi'|> 0, \psi''< 0$, and $\psi'\psi'''< 0$ in $[-L, L] \setminus  \omega,$} \label{carl2}\\
& \text{ $\psi'(-L) < 0$ and $\psi'(L) > 0,$} \label{carl3} \\
& \text{ $\min_{x \in [l_1,l_2]}\psi(x)=\psi (l_3) < \max_{x\in [l_1,l_2]}\psi(x)=\psi(l_2)=\psi(l_3), \quad \max_{x\in[-L,L]}\psi(x)=\psi(-L)=\psi(L),$} \label{carl4}\\
& \text{$\psi(-L)\leq \frac{4}{3}\psi(l_3).$} \label{carl5}
\end{align}
The existence of such a function is guaranteed in \cite{capistrano2015}. 
\begin{lemma}[\textbf{Carleman's inequality}]\label{porpcarl}
Let $T>0$. Then, there exist positive constants $s_0 = s_0(T,\omega)$ and $C = C(T,\omega)$, such that, for all $s \geq s_0$ and any $u_0 \in L^2(-L,L)$, the solution $u$ of \eqref{ibvp} fulfills  
\begin{multline}\label{car'}
\int_{0}^{T}\int_{-L}^{L}\left\lbrace \frac{s^5\psi^5}{t^5(T-t)^5}|u|^2+\frac{s^3\psi^3}{t^3(T-t)^3}|u_x|^2+\frac{s\psi}{t(T-t)}|u_{xx}|^2\right\rbrace e ^{-\frac{2s\psi(x)}{t(T-t)}}dxdt \\
+\int_0^T \left\lbrace \frac{s^3\psi(L)^3}{t^3(T-t)^3}|u_x(-L)|^2+\frac{s\psi}{t(T-t)}|u_{xx}(-L)|^2\right\rbrace e ^{-\frac{2s\psi(L)}{t(T-t)}} dt \\
\leq C \int_0^T\int_{\omega} \left\lbrace \frac{s^5\psi^5}{t^5(T-t)^5}|u|^2+\frac{s^3\psi^3}{t^3(T-t)^3}|u_x|^2+\frac{s\psi}{t(T-t)}|u_{xx}|^2\right\rbrace e ^{-\frac{2s\psi(x)}{t(T-t)}}dxdt 
\end{multline}
\end{lemma}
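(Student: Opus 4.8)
The plan is to derive \eqref{car'} from the Carleman estimate for the pure third--order operator $\partial_t+\partial_x^3$ proved in \cite[Proposition 3.1]{capistrano2015}, by treating the dissipative term of \eqref{ibvp} as a forcing term. Writing the equation in \eqref{ibvp} as $u_t+u_{xxx}=u_{xx}=:f$, I observe that the boundary conditions in \eqref{ibvp} are exactly those of \eqref{kdv} (after the harmless translation carrying $(0,L)$ onto $(-L,L)$) and that $\psi$ has been chosen so as to satisfy precisely the hypotheses \eqref{carl1}--\eqref{carl5} required there. The obstruction --- already pointed out just before the statement --- is regularity: for $u_0\in L^2(-L,L)$ the weak solution produced by Theorem \ref{homog} need not have $f=u_{xx}\in L^2(0,T;L^2(-L,L))$, so \cite[Proposition 3.1]{capistrano2015} is not applicable verbatim. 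I would therefore first prove \eqref{car'} for $u_0\in D(A)$, in which case Theorem \ref{homog} gives $u\in C([0,T];D(A))\cap C^1((0,T);L^2(-L,L))$; in particular $u(\cdot,t)\in H^3(-L,L)$ for every $t$ and, through the equation, $u_{xx},u_{xxx}\in C([0,T];L^2(-L,L))$, which is more than enough to invoke \cite[Proposition 3.1]{capistrano2015} with $f=u_{xx}$.

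Applying that proposition to $q=u$, $f=u_{xx}$, one bounds the left--hand side of \eqref{car'} by $C$ times the local term $\int_0^T\!\!\int_\omega \frac{s^5\psi^5}{t^5(T-t)^5}|u|^2 e^{-2s\psi/(t(T-t))}\,dx\,dt$ plus $C$ times a forcing term which, owing to the factor $e^{-2s\psi/(t(T-t))}$, is in turn dominated by $\int_0^T\!\!\int_{-L}^L |u_{xx}|^2 e^{-2s\psi/(t(T-t))}\,dx\,dt$. The decisive point is the absorption of this last term: since $\psi>0$ on $[-L,L]$ and $t(T-t)\le T^2/4$, one has $\frac{s\psi(x)}{t(T-t)}\ge cs$ with $c=4\min_{[-L,L]}\psi/T^2>0$, so for $s\ge s_0$ large enough $C\le \frac12\,\frac{s\psi(x)}{t(T-t)}$ pointwise, and the whole--interval $u_{xx}$ term is absorbed into $\int_0^T\!\!\int_{-L}^L \frac{s\psi}{t(T-t)}|u_{xx}|^2 e^{-2s\psi/(t(T-t))}\,dx\,dt$, which already sits on the left. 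What survives on the right is the single local term above, which is trivially dominated by the right--hand side of \eqref{car'} (the latter only adds the nonnegative contributions $\frac{s^3\psi^3}{t^3(T-t)^3}|u_x|^2$ and $\frac{s\psi}{t(T-t)}|u_{xx}|^2$ on $\omega$). This proves \eqref{car'} for $u_0\in D(A)$, with $C$ and $s_0$ independent of $u_0$.

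It remains to remove the restriction $u_0\in D(A)$, which I would do by density: choose $u_0^n\in D(A)$ with $u_0^n\to u_0$ in $L^2(-L,L)$, so that $u^n=S_L(\cdot)u_0^n\to u=S_L(\cdot)u_0$ in $C([0,T];L^2(-L,L))$. Since $\overline\omega\subset(-L,L)$, the parabolic smoothing effect of the KdV--Burgers semigroup gives interior bounds $\|S_L(t)v\|_{H^2(\omega)}\le C t^{-1}\|v\|_{L^2(-L,L)}$ for small $t$; combined with the super--exponential vanishing of the Carleman weights at $t=0$ and $t=T$, this yields $u^n\to u$ in the weighted space occurring on the right--hand side of \eqref{car'}, while the left--hand side for $u$ is controlled by weak lower semicontinuity of the weighted $L^2$--norms (Fatou), using that \eqref{car'} for $u^n$ keeps $\|u^n\|$ bounded in that norm. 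This passage to the limit, together with the bookkeeping of the boundary contributions at $x=-L$ --- in particular, making sure that the term $\frac{s\psi}{t(T-t)}|u_{xx}(-L)|^2$ in \eqref{car'} is genuinely produced, i.e. retained through the integrations by parts in the Carleman computation exactly as the $\frac{s^3\psi(L)^3}{t^3(T-t)^3}|u_x(-L)|^2$ term is, and checking that a $\psi$ meeting \eqref{carl1}--\eqref{carl5} remains admissible there after the translation $(0,L)\mapsto(-L,L)$ --- is where I expect the real work to lie; the absorption step itself is routine.
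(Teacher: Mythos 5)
Your proposal is correct, but it takes a genuinely different route from the paper's. The paper does not use \cite[Proposition 3.1]{capistrano2015} as a black box: it redoes the whole Carleman computation for the full conjugated operator $e^{-s\varphi}(\partial_t-\partial_x^2+\partial_x^3)e^{s\varphi}$, splitting it into $L_1v=v_t+v_{xxx}+Bv_x$ and $L_2v=Av+Cv_{xx}$, expanding $2\int L_1v\,L_2v\,dxdt\le 0$ by integration by parts, and verifying term by term (the coefficients $D,E,F,G,H$ in \eqref{eA'}--\eqref{eH'}) that the extra contributions created by $-\partial_x^2$ --- visible in the shifts $C=3s\varphi_x-1$ and $B=\cdots-2s\varphi_x$ --- are of lower order in $s\varphi$ than the leading KdV terms. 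Your route instead treats $u_{xx}$ as a source for $\partial_t+\partial_x^3$ and absorbs $C\int_0^T\int_{-L}^L|u_{xx}|^2e^{-2s\varphi}\,dxdt$ into the $s\varphi|u_{xx}|^2$ term already present on the left; this is the standard way of perturbing a Carleman estimate by lower-order terms, and it is sound here precisely because the left-hand side controls $u_{xx}$ with an extra factor $s\varphi\gtrsim s$. Note also that the regularity objection the paper raises just before the lemma (that one cannot set $f=q_{xx}$ for $L^2$ data) is resolved in the paper's own proof exactly as you resolve it --- first take $u_0\in D(A)$, then conclude by density --- so it is not a real obstruction to your approach. What your route buys is brevity and modularity; what it costs is that the boundary terms at $x=-L$ and the local terms on $\omega$ in \eqref{car'} are only as strong as those delivered by the cited proposition (which does carry the $(s\varphi)^3|q_x|^2$ and $s\varphi|q_{xx}|^2$ contributions at the endpoint where only the Dirichlet condition is imposed, so this works out), whereas the paper's self-contained computation produces them explicitly and keeps the dependence of $s_0$ and $C$ on $T$ and $\omega$ transparent.
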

\begin{proof}
First, we suppose that $u_0 \in D(A)$, so that $u \in C([0, T ];D(A))\cap C^1([0, T ];L^2(-L, L))$. The general follows by a density argument.  Let $u=u(x,t)$ and  $\varphi(t,x)=\frac{\psi(x)}{t(T-t)}$, where $\psi$ is a positive function satisfying \eqref{carl1}-\eqref{carl5}. Consider 
\begin{equation*}
v := e^{-s\varphi}u \quad \text{and} \quad w := e^{-s\varphi}P(e^{s\varphi} v),
\end{equation*}
where $P$ is the differential operator given by
\begin{equation*}
P=\partial_t-\partial^2_x +\partial_x^3.
\end{equation*}
Note that 
\begin{align*}
&\partial_t(e^{s\varphi}v)=e^{s\varphi}\left\lbrace s\varphi_tv+v_t\right\rbrace, \\
&\partial_x(e^{s\varphi}v)=e^{s\varphi}\left\lbrace s\varphi_xv+v_x\right\rbrace, \\
&\partial_x^2(e^{s\varphi}v)=e^{s\varphi}\left\lbrace s\varphi_{xx}v+s^2\varphi_{x}^2v+2s\varphi_{x}v_{x}+u_{xx}\right\rbrace, \\
&\partial_x^3(e^{s\varphi}v)=e^{s\varphi}\left\lbrace s\varphi_{xxx}v+3s^2\varphi_{x}\varphi_{xx}v+3s\varphi_{xx}v_{x}+s ^3\varphi_{x}^3v+3s^2\varphi_{x}^2v_{x}+3s\varphi_{x}v_{xx}+v_{xxx}\right\rbrace.
\end{align*}
Hence, 
\begin{multline*}
P(e^{s\varphi}v)=e^{s\varphi} \left\lbrace \left( s\varphi_t+s\varphi_{xxx}+3s^2\varphi_{x}\varphi_{xx}+s^3\varphi_{x}^3-s\varphi_{xx}-s^2\varphi_{x}^2\right)v \right. \\ 
\left.+\left(3s\varphi_{xx}+3s^2\varphi_{x}^2-2s\varphi_{x}\right) v_x +\left(3s\varphi_{x}-1\right)v_{xx}+v_{xxx}+v_t\right\rbrace
\end{multline*}
and 
\begin{equation*}
w=Av+Bv_{x}+Cv_{xx}+v_{xxx}+v_{t},
\end{equation*}
Note that by definition of $w$ and using the boundary conditions of \eqref{ibvp}, we have that
\begin{equation}\label{e133'}
Av+Bv_{x}+Cv_{xx}+v_{xxx}+v_{t}=0.
\end{equation}
where
\begin{align}
A &= s(\varphi_t-\varphi_{xx}+\varphi_{xxx})+3s^2\varphi_{x}\varphi_{xx}+s^3\varphi_{x}^3-s^2\varphi_{x}^2,  \label{e134'}\\
B &= 3s\varphi_{xx}+3s^2\varphi_{x}^2-2s\varphi_{x}, \label{e135'}\\
C &= 3s\varphi_{x}-1 \label{e136'}.
\end{align}
Set $L_1v:=v_t+v_{xxx}+Bv_{x}$ and $L_2 v:=Av+Cv_{xx}$. Thus, we have
\begin{equation}\label{e4.6'}
2\int_0^T\int_{-L}^{L}L_1(v)L_2(v)dxdt\leq \int_0^T\int_{-L}^{L}\left( L_1(v)+L_2(v)\right)^2 dxdt\ = 0.
\end{equation} 
In the following, our efforts will be devoted to compute the double product in the previous equation. Let us denote by $(L_iv)_j$ the $j$-th term of $L_iv$ and $Q=[0,T]\times[-L,L]$. Then, to compute the integrals on the right hand side of (\ref{e4.6'}), we perform integration by part in $x$ or $t$:
\begin{align*}
\left( (L_1v)_1,(L_2v)_1\right)_{L^2(Q)}=-\frac{1}{2} \int_Q A_tv^2dxdt,
\end{align*}
\begin{align*}
\left( (L_1v)_2,(L_2v)_1\right)_{L^2(Q)}&=-\frac{1}{2} \int_Q A_{xxx}v^2dxdt + \frac{3}{2} \int_Q A_x v_x^2dxdt +\frac{1}{2}\int_0^T A(-L)v^2_x(-L)dt
\end{align*} 
\begin{align*}
\left( (L_1v)_3,(L_2v)_1\right)_{L^2(Q)}&=- \frac{1}{2} \int_Q (AB)_{x}v^2dxdt,
\end{align*}
\begin{align*}
\left( (L_1v)_2,(L_2v)_2\right)_{L^2(Q)}&= -\frac{1}{2} \int_Q C_{x}v^2_{xx}dxdt  +\frac{1}{2}\int_0^TC(L)v_{xx}^2(L)dt - \frac{1}{2}\int_0^TC(-L)v_{xx}^2(-L)dt
\end{align*}
\begin{align*}
\left( (L_1v)_3,(L_2v)_2\right)_{L^2(Q)}&=  -\frac{1}{2} \int_Q (BC)_{x}v^2_xdxdt -\frac{1}{2}\int_0^TB(-L)C(-L)v_{x}^2(-L)
\end{align*}
By using (\ref{e133'}), we have that
\begin{align*}
( (L_1v)_1,&(L_2v)_2)_{L^2(Q)}=-\frac{1}{2} \int_Q C \partial_t(v^2_x)dxdt  - \int_Q C_xv_xv_tdxdt   \\
=&\frac{1}{2}\int_Q C_tv^2_xdxdt  + \int_Q C_xv_x \left(Av+Bv_x +Cv_{xx}+v_{xxx}\right) dxdt\\ 
=&\frac{1}{2}\int_Q C_tv^2_xdxdt +\frac{1}{2}\int_Q AC_x (v^2)_x dxdt + \int_Q BC_xv_x^2dxdt +\frac{1}{2}\int_Q CC_x (v^2_x)_xdxdt \\
&+\int_Q C_xv_xv_{xxx}dxdt\\
=&\frac{1}{2}\int_Q C_tv^2_xdxdt -\frac{1}{2}\int_Q (AC_x)_xv^2dxdt + \int_Q BC_xv_x^2dxdt -\frac{1}{2}\int_Q (CC_x)_xv^2_xdxdt \\
&-\frac{1}{2}\int_0^T C(-L)C_x(-L)v^2_x(-L)dt +\frac{1}{2}\int_Q C_{xxx}v_x^2dxdt +\frac{1}{2}\int_0^TC_{xx}(-L)v^2_x(-L)dt \\
&-\int_QC_xv_{xx}^2dxdt -\int_0^TC_x(-L)v_x(-L)v_{xx}(-L)dt 
\end{align*}
applying Young inequality, it follows that 
\begin{align*}
( (L_1v)_1,(L_2v)_2)_{L^2(Q)}&\geq\frac{1}{2}\int_Q \left\lbrace C_t+2BC_x -(CC_x)_x+C_{xxx}\right\rbrace v^2_xdxdt-\frac{1}{2}\int_Q (AC_x)_x v^2dxdt \\
&- \int_Q C_xv_{xx}^2dxdt -\frac{1}{2}\int_0^TC_x^2(-L)v_x^2(-L) -\frac{1}{2}\int_0^Tv^2_{xx}(-L)dt \\
& +\frac{1}{2}\int_0^T\left\lbrace  C_{xx}(-L)-C(-L)C_x(-L)\right\rbrace v^2_x(-L)dt 
\end{align*}
Putting together the inequalities above, we have
\begin{align*}
2\int_0^T\int_{-L}^{L}&L_1(v)L_2(v)dxdt \geq  -\int_Q\left\lbrace A_t+A_{xxx}+(AB)_x+(AC_x)_x\right\rbrace v^2dxdt \\
&+ \int_Q \left\lbrace 3A_x - (BC)_x+C_t+2BC_x -(CC_x)_x+C_{xxx} \right\rbrace v^2_xdxdt \\
&- 3\int_Q C_xv_{xx}^2dxdt -\int_0^T(C(L)+1)v_{xx}^2(L)dt \\
&+\int_0^T\left\lbrace A(-L)-B(-L)C(-L)-C(-L)C_x(-L)+C_{xx}(-L)-C^2_{xx}(-L)\right\rbrace v_x^2(-L)dt.
\end{align*}
From \eqref{e4.6'}, it follows that
\begin{equation}\label{e137'}
\int_Q \left\lbrace D v^2 +Eu_x^2+ F v_{xx}^2\right\rbrace dxdt +\int_0^T G v_x^2(-L)dt + \int_0^T H v^2_{xx}(-L)dt \leq 0
\end{equation}
with
\begin{align}
&D = - \left( A_t+A_{xxx}+(AB)_x+(C_xA)_x\right), \label{eA'}\\
&E= 3A_x +BC_x-B_xC- (CC_x)_x+C_{xxx} +C_t, \label{eE'}\\
&F=-3C_x,   \label{eF'} \\
&G= A(-L)-B(-L)C(-L)-C(-L)C_x(-L)+C_{xx}(-L)-C^2_{xx}(-L), \label{eG'} \\
&H=-C(-L)-1. \label{eH'}
\end{align}
In order to make the reading easier, the proof will be done in several steps to estimate every terms in the integral \eqref{e137'}:
\vglue 0.4cm
\noindent \textbf{Step 1:} Estimation of $\int_Q  D u^2dxdt$.
\vglue 0.2cm
First at all, note that
\begin{align*}
A_t= &s(\varphi_{tt}-\varphi_{xxt}+\varphi_{xxxt})+3s^2\varphi_{xt}\varphi_{xx}+3s^2\varphi_{x}\varphi_{xxt}+3s^3\varphi_{x}^2\varphi_{xt}-2s^2\varphi_{x}\varphi_{xt} \\
A_{xxx} = &s(\varphi_{txxx}+\varphi_{6x}-\varphi_{5x})+9s^2\varphi_{xxx}^2+12s^2\varphi_{xx}\varphi_{4x}+3s^2\varphi_{x}\varphi_{5x}+6s^3\varphi_{xx}^3+18s^3\varphi_{x}\varphi_{xx}\varphi_{xxx} \\
&+3s^3\varphi_{x}^2\varphi_{4x}-6s^2\varphi_{xx}\varphi_{xxx}-2s^2\varphi_{x}\varphi_{4x} \\
AB =& 3s^2\varphi_{xx}\varphi_{t}+3s^2\varphi_{xx}\varphi_{xxx}-3s^2\varphi_{xx}^2+9s^3\varphi_{x}\varphi_{xx}^2+12s^4\varphi_{x}^3\varphi_{xx}-12s^3\varphi_{x}^2\varphi_{xx}+3s^3\varphi_{x}^2\varphi_{t} \\
&+12s^3\varphi_{x}^2\varphi_{xxx}+3s^5\varphi_{x}^5-5s^4\varphi_{x}^4-2s^2\varphi_{x}\varphi_{t}-2s^2\varphi_{x}\varphi_{xxx}-2s^2\varphi_{x}\varphi_{xx}+2s^3\varphi_{x}^3 \\
(AB)_x = &3s^2\varphi_{xxx}\varphi_{t}+3s^2\varphi_{xx}\varphi_{xt}+3s^2\varphi_{xxx}^2+3s^2\varphi_{xx}\varphi_{4x}-8s^2\varphi_{xx}\varphi_{xxx}+9s^3\varphi_{xx}^3+24s^3\varphi_{x}\varphi_{xx}\varphi_{xxx} \\
&+36s^4\varphi_{x}^2\varphi_{xx}^2+12s^4\varphi_{x}^3\varphi_{xxx}-24s^3\varphi_{x}\varphi_{xx}^2-12s^3\varphi_{x}^2\varphi_{xxx}+6s^3\varphi_{x}\varphi_{xx}\varphi_{t} +3s^3\varphi_{x}^2\varphi_{xt} \\
&+3s^3\varphi_{x}^2\varphi_{4x}+15s^5\varphi_{x}^4\varphi_{xx}-20s^4\varphi_{x}^3\varphi_{xx}-2s^2\varphi_{xx}\varphi_{t}-2s^2\varphi_{x}\varphi_{xt} -2s^2\varphi_{x}^2\varphi_{4x} -2s^2\varphi_{xx}^2 \\
&-2s^2\varphi_{x}\varphi_{xxx}+6s^3\varphi_{x}^2\varphi_{xx} \\
AC_x = & 3s^2\varphi_{xx}\varphi_{t}+3s^2\varphi_{xx}\varphi_{xxx}-3s^2\varphi_{xx}^2+9s^3\varphi_{x}\varphi_{xx}^2+3s^4\varphi_{x}^3\varphi_{xx}-3s^3\varphi_{x}^2\varphi_{xx} \\
(AC_x)_x = & 3s^2\varphi_{xxx}\varphi_{t}+3s^2\varphi_{xx}\varphi_{xt}+3s^2\varphi_{xxx}^2+3s^2\varphi_{xx}\varphi_{4x}-6s^2\varphi_{xx}\varphi_{xxx}+9s^3\varphi_{xx}^3+18s^3\varphi_{x}\varphi_{xx}\varphi_{xxx} \\
&+9s^4\varphi_{x}^2\varphi_{xx}^2 +3s^4\varphi_{x}^3\varphi_{xxx}-6s^3\varphi_{x}\varphi_{xx}^2-3s^3\varphi_{x}^2\varphi_{xxx}.
\end{align*}
All these estimations give us that 
\begin{align}\label{D}
D &= -15 s^5\varphi_{x}^4\varphi_{xx} +D_1 
\end{align}
where 
\begin{align*}
- D_1 = & s\varphi_{tt}+s\varphi_{xxxt}-s\varphi_{xxt}+9s^2\varphi_{xt}\varphi_{xx}+3s^2\varphi_{x}\varphi_{xxt}+3s^3\varphi_{x}^2\varphi_{xt}-4s^2\varphi_{x}\varphi_{xt} \\
&+s\varphi_{txx}+s\varphi_{6x}-s\varphi_{5x}+18s^2\varphi_{xxx}^2+15s^2\varphi_{xx}\varphi_{4x}+3s^2\varphi_{x}\varphi_{5x}+24s^3\varphi_{xx}^3+60s^3\varphi_{x}\varphi_{xx}\varphi_{xxx} \\
&+6s^3\varphi_{x}^2\varphi_{4x}-20s^2\varphi_{xx}\varphi_{xxx}-4s^2\varphi_{x}\varphi_{4x}+6s^2\varphi_{xxx}\varphi_{t} +45s^4\varphi_{x}^2\varphi_{xx}^2+15s^4\varphi_{x}^3\varphi_{xxx} \\
&-30s^3\varphi_{x}\varphi_{xx}^2 -15s^3\varphi_{x}^2\varphi_{xxx}+6s^3\varphi_{x}\varphi_{xx}\varphi_{t}+3s^3\varphi_{x}^2\varphi_{xt} -20s^4\varphi_{x}^3\varphi_{xx}-2s^2\varphi_{xx}\varphi_{t}-2s^2\varphi_{xx}^2\\
& -2s^2\varphi_{x}\varphi_{xxx} +6s^3\varphi_{x}^2\varphi_{xx}.
\end{align*}
Note that \eqref{carl1}-\eqref{carl5} imply that 
\begin{equation}\label{esti}
|\varphi_{t}| \leq K_1 \varphi^2, \quad |\varphi_{tt}| \leq K_2 \varphi^3, \quad \text{and} \quad |\partial^k_x\varphi| \leq C_k \varphi,
\end{equation}
where $K_1, K_2$ and $C_k$ are positive constants depending of $L$, $\omega$ and $k$. Therefore, there exist a constant $k_1>0$, such that
\begin{equation*}
|D_1|\leq k_1 s^4\varphi^4, \quad (x,t) \in (-L,L)\times (0,T)
\end{equation*}
and 
\begin{equation*}
|15 s^5\varphi_{x}^4\varphi_{xx}|\leq k_1 s^5\varphi^5, \quad (x,t) \in \omega\times (0,T).
\end{equation*}
We infer from \eqref{carl2} that for some $k_2 > 0$,
\begin{equation*}
-15 s^5\varphi_{x}^4\varphi_{xx} = -15 s^5 \frac{(\psi')^4\psi''}{t^5(T-t)^5} \geq k_2 s^5 \varphi^5, \quad (x,t) \in ([-L,L]\setminus\omega)\times (0,T). 
\end{equation*}
Taking \eqref{D} into a count and using the above estimates in the first integral in \eqref{e137'}, we obtain
\begin{align*}
\int_Q D v^2 dxdt  =&  -15 \int_Q  s^5\varphi_{x}^4\varphi_{xx} v^2 dxdt + \int_Q D_1 v^2 dxdt \\
\geq & k_2\int_{(0,T)\times([-L,L]\setminus\omega)}(s\varphi)^5 v^2 dxdt - k_1\int_{(0,T)\times\omega}(s\varphi)^5 v^2 dxdt - k_1\int_{Q}(s\varphi)^4 v^2 dxdt \\
=& \int_{Q}\left\lbrace k_2(s\varphi)^5-k_1(s\varphi)^4\right\rbrace v^2 dxdt - (k_1+k_2)\int_{(0,T)\times\omega}(s\varphi)^5 v^2 dxdt.
\end{align*}
Thus, there exist a positive constants $C_1$ and $C_2$, such that for any $s\geq s_1$ with $s_1$ large enough, we obtain
\begin{align}\label{cD'}
\int_Q D v^2 dxdt  \geq & C_1\int_{Q}(s\varphi)^5 v^2 dxdt - C_2\int_{(0,T)\times\omega}(s\varphi)^5 v^2 dxdt.
\end{align}
\vglue 0.4cm
\noindent \textbf{Step 2:} Estimation for  $\int_Q  E v^2_xdxdt$ and $\int_Q  F v^2_{xx}dxdt$.
\vglue 0.2cm
Note that
\begin{align*}
BC_x = & 9s^2\varphi_{xx}^2+9s^3\varphi_{x}^2\varphi_{xx}-6s^2\varphi_{x}\varphi_{xx}, \\
-B_xC= & -27s^2\varphi_{x}\varphi_{xxx}-18s^3\varphi_{x}^2\varphi_{xx}+12s^2\varphi_{x}\varphi_{xx}+9s\varphi_{xxx}-2s\varphi_{xx}, \\
3A_x = &3s\varphi_{xt}+3s\varphi_{4x}-3s\varphi_{xxx}+9s^2\varphi_{xx}^2+9s^2\varphi_{x}\varphi_{xxx}+9s^3\varphi_{x}^2\varphi_{xx}-6s^2\varphi_{x}\varphi_{xx},\\
CC_x = &9s^2\varphi_{x}\varphi_{xx}-3s\varphi_{xx}, \\
-(CC_x)_x = &-9s^2\varphi_{xx}^2-9s^2\varphi_{x}\varphi_{xxx}+3s\varphi_{xxx}, \\
C_{xxx}+C_t = &3s\varphi_{4x}+3s\varphi_{xt}.
\end{align*}
Putting together these expressions, we have
\begin{align*}
E = 6s\varphi_{xt}+6s\varphi_{4x}+9s^2\varphi_{xx}^2-27s^2\varphi_{x}\varphi_{xxx}+9s\varphi_{xxx}-2s\varphi_{xx}.
\end{align*}
We infer from \eqref{carl1}-\eqref{carl5} that for some $k_3 > 0$ and $k_4>0$, 
\begin{gather*}
9s^2\varphi_{xx}^2-27s^2\varphi_{x}\varphi_{xxx} =\frac{9s^2\left((\psi'')^2-3\psi'\psi'''\right)}{t^2(T-t)^2}\geq k_3 (s\varphi)^2, \quad (x,t) \in \left([-L,L]\setminus \omega\right)\times (0,T), \\
\left|9s^2\varphi_{xx}^2-27s^2\varphi_{x}\varphi_{xxx}\right| \leq  k_4 (s\varphi)^2, \quad (x,t) \in  \omega\times (0,T), \\
\left| 6s\varphi_{xt}+6s\varphi_{4x}+9s\varphi_{xxx}-2s\varphi_{xx}\right| \leq k_4 s\varphi^2, \quad (x,t) \in  [-L,L]\times (0,T). 
\end{gather*}
By using the above estimates, we obtain
\begin{align*}
\int_Q E v^2_x dxdt  =&  \int_Q \left(9s^2\varphi_{xx}^2-27s^2\varphi_{x}\varphi_{xxx}\right) v^2_x dxdt \\
&+ \int_Q \left(6s\varphi_{xt}+6s\varphi_{4x}+9s\varphi_{xxx}-2s\varphi_{xx} \right) v^2_x dxdt \\
\geq & k_3\int_{(0,T)\times([-L,L]\setminus\omega)}(s\varphi)^2 v^2_x dxdt - k_4\int_{(0,T)\times\omega}(s\varphi)^2 v^2_x dxdt - k_4\int_{Q}s\varphi^2 v^2_x dxdt \\
=& \int_{Q}\left\lbrace k_3(s\varphi)^2-k_4s\varphi^2\right\rbrace v^2_x dxdt - (k_3+k_4)\int_{(0,T)\times\omega}(s\varphi)^2 v^2_x dxdt.
\end{align*}
Thus, there exist positive constants $C_2$ and $C_3$, such that, for any $s\geq s_2$ with $s_2$ large enough, we obtain
\begin{equation}\label{cE'}
\int_Q E v^2_xdxdt \geq C_2\int_{Q} (s\varphi)^2v^2_x dxdt - C_3\int_{(0,T)\times\omega}(s\varphi)^3 v^2_x dxdt.
\end{equation}
Moreover, note that \eqref{carl2} implies that there exist $C_4>0$ and $C_5>0$ such that
\begin{align*}
F= -9s\varphi_{xx} = - \frac{9s\psi''}{t(T-t)} \geq C_4 s\varphi, \quad (x,t) \in \left([-L,L]\setminus \omega\right)\times (0,T)
\end{align*}
and 
\begin{align*}
|9s\varphi_{xx}| \leq C_5 s\varphi, \quad (x,t) \in \left(\omega\right)\times (0,T).
\end{align*}
Furthermore, 
\begin{equation}\label{cF'}
\int_Q F v^2_{xx}dxdt \geq C_4\int_{Q} s\varphi v^2_{xx} dxdt - (C_4+C_5)\int_{(0,T)\times\omega}s\varphi v^2_{xx} dxdt.
\end{equation}
\vglue 0.4cm
\noindent \textbf{Step 3:} Estimation for  $\int_0^T G u_x^2(L)dt$ and $\int_0^T H u^2_{xx}(L)dt$:
\vglue 0.2cm
First, observe that
\begin{align*}
A = & s\varphi_t-s\varphi_{xx}+s\varphi_{xxx}+3s^2\varphi_{x}\varphi_{xx}+s^3\varphi_{x}^3-s^2\varphi_{x}^2,  \\
-BC= & -9s^2\varphi_x\varphi_{xx} +3s\varphi_{xx}+9s^2\varphi_x^2-9s^3\varphi_x^3-2s\varphi_x, \\
-CC_x = &-9s^2\varphi_{x}\varphi_{xx}+3s\varphi_{xx}, \\
C_{xx}=&3s\varphi_{xxx}, \\
-C_x^2=&-9s^2\varphi_{xx}^2.
\end{align*}
The above identities imply that
\begin{equation*}
G= -8s^3\varphi_x^3(-L) +G_1 
\end{equation*}
where
\begin{multline*}
G_1= s\varphi_t(-L)+5s\varphi_{xx}(-L)+4s\varphi_{xxx}(-L)-15s^2\varphi_{x}(-L)\varphi_{xx}(-L) +8s^2\varphi_x^2(-L) \\
-2s\varphi_x(-L)-9s^2\varphi_{xx}^2(-L). 
\end{multline*}
We infer from \eqref{carl1}-\eqref{carl5} that for some $k_5 > 0$ and $k_6>0$, 
\begin{equation*}
-8s^3\varphi_x^3(-L)=-\frac{8s^3(\psi')^3(-L)}{t^3(T-t)^3} \geq k_5(s\varphi(L))^3, \quad t \in (0,T).
\end{equation*}
and 
\begin{equation*}
|G_1| \leq k_6(s\varphi(L))^2, \quad t \in (0,T).
\end{equation*}
Then, it follows that
\begin{equation*}
\int_0^T G v^2_x(-L)dt \geq \int_0^T \left( k_5(s\varphi(L))^3-k_5(s\varphi(L))^2\right) v^2_x(-L) dxdt.
\end{equation*}
Thus, there exists a positive constant $C_6$, such that, for any $s\geq s_3$ with $s_3$ large enough, we obtain
\begin{equation}\label{eG1'}
\int_0^T G v^2_x(-L)dt \geq C_6\int_0^T (s\varphi(L))^3v^2_x(-L) dxdt.
\end{equation}
Moreover, note that \eqref{carl3} implies that there exist $k_7>0$ such that
\begin{align*}
H=-3s\varphi_x(-L) -2 \geq k_7 s\varphi(L)-2, \quad t \in (0,T).
\end{align*}
Furthermore, 
\begin{align*}
\int_0^T H v^2_{xx}(-L)dt \geq \int_0^T \left( k_7 s\varphi(L)-2\right) v^2_{xx}(-L) dxdt.
\end{align*}
Hence, there exists a positive constant $C_7$, such that for any $s\geq s_4$ with $s_4$ large enough, we obtain
\begin{equation}\label{eH1'}
\int_0^T H v^2_{xx}(-L)dt \geq C_7\int_0^T s\varphi(L) v^2_{xx}(-L) dxdt.
\end{equation}
Combining (\ref{e137'}) together with (\ref{cD'})-\eqref{eH1'}, we obtain
\begin{multline}\label{e140}
\int_Q \left\lbrace \frac{s^5\psi^5}{t^5(T-t)^5}|v|^2+\frac{s^2\psi^2}{t^2(T-t)^2}|v_x|^2+\frac{s\psi}{t(T-t)}|v_{xx}|^2\right\rbrace dxdt \\
+\int_0^T \left\lbrace \frac{s^3\psi(-L)^3}{t^3(T-t)^3}|v_x(-L)|^2+\frac{s\psi(-L)}{t(T-t)}|v_{xx}(-L)|^2\right\rbrace dt \\
 \leq C_8  \int_{(0,T)\times \omega} \left\lbrace \frac{s^5\psi^5}{t^5(T-t)^5}|v|^2+\frac{s^3\psi^3}{t^3(T-t)^3}|v_x|^2+\frac{s\psi}{t(T-t)}|v_{xx}|^2\right\rbrace dxdt 
\end{multline}
for some $C_8 >0$. On the other hand, note that
\begin{multline*}
\int_Q \frac{s^3\psi^3}{t^3(T-t)^3}v_x^2dxdt = -\int_Q\frac{s^3\psi^3}{t^3(T-t)^3}vv_{xx}dxdt -\int_Q\frac{2s^3\psi^2\psi'}{t^3(T-t)^3}vv_{x}dxdt\\
\leq \int_Q \frac{s^5\psi^5}{2t^5(T-t)^5}|v|^2dxdt +\int_Q\frac{s\psi}{2t(T-t)}|v_{xx}|^2dxdt \\
+\max_{x \in [-L,L]}\{(\psi'(x))^2\}\int_Q \frac{s^4\psi^2}{t^4(T-t)^4}|v|^2dxdt +\int_Q\frac{s^2\psi^2}{t^2(T-t)^2}|v_{x}|^2dxdt
\end{multline*}
From \eqref{e140} and using the fact that $s$ is large enough, there exist $C>0$ such that
\begin{multline}\label{e138'}
\int_Q \left\lbrace \frac{s^5\psi^5}{t^5(T-t)^5}|v|^2+\frac{s^3\psi^3}{t^3(T-t)^3}|v_x|^2+\frac{s\psi}{t(T-t)}|v_{xx}|^2\right\rbrace dxdt \\
+\int_0^T \left\lbrace \frac{s^3\psi(L)^3}{t^3(T-t)^3}|v_x(-L)|^2+\frac{s\psi}{t(T-t)}|v_{xx}(-L)|^2\right\rbrace dt \\
 \leq C_8  \int_{(0,T)\times \omega} \left\lbrace \frac{s^5\psi^5}{t^5(T-t)^5}|v|^2+\frac{s^3\psi^3}{t^3(T-t)^3}|v_x|^2+\frac{s\psi}{t(T-t)}|v_{xx}|^2\right\rbrace dxdt. 
\end{multline}
Returning to the original variable $v=e^{-s\varphi}u$, we conclude the proof of the Lemma.
\end{proof}
%%%%%%%%%%%%%
In order to prove the Proposition \ref{observ}, consider following spaces
\begin{equation*}
\begin{array}{l l}
X_0:=L^2(0,T;H^{-2}(0,L)), & X_1:=L^2(0,T;H_0^{2}(0,L)), \\
\widetilde{X}_0:=L^1(0,T;H^{-1}(0,L)), & \widetilde{X}_1:=L^1(0,T;(H^{3}\cap H^{2}_0)(0,L)),
\end{array}
\end{equation*}
and
\begin{align*}
&Y_0 := L^2 ((0, T ) \times (0, L)) \cap C^0 ([0, T ] ; H^{-1} (0, L)), \\
&Y_1 := L^2 (0, T ; H^4 (0, L)) \cap C^0 ([0, T ] ; H^3 (0, L))
\end{align*}
equipped with their natural norm.  For any $\theta \in [0,1]$, we define the complex interpolation space 
\begin{align*}
X_{\theta}=\left( X_0,X_1\right)_{[\theta]}, \quad \widetilde{X}_{\theta}=\left( \widetilde{X}_0,\widetilde{X}_1\right)_{[\theta]}, \quad \text{and} \quad Y_{\theta}=\left( Y_0,Y_1\right)_{[\theta]}.
\end{align*}
For instance, we obtain that 
\begin{equation*}
\begin{array}{l l}
X_{1/2}=L^2((0,T)\times (0,L)), & \widetilde{X}_{1/2}=L^1(0,T;H_0^{1}(0,L)), \\
X_{1/4}=L^2(0,T;H^{-1} (0,L)), & \widetilde{X}_{1/4}=L^1(0,T;L^2(0,L)), 
\end{array}
\end{equation*}
and
\begin{align*}
&Y_{1/2}=L^2(0,T; H^2(0,L)) \cap C^0([0,T];H^{1}(0,L)), \\
&Y_{1/4}=L^2(0,T; H^1(0,L)) \cap C^0([0,T];L^{2}(0,L)). 
\end{align*}
We introduce the following non-homogeneous system with null initial data:
\begin{equation}\label{ibvp2}
\begin{cases}
u_t -u_{xx}+u_{xxx}=f & \text{in $ (-L,L) \times (0,T),$} \\
u(- L,t)=u(L,t)=u_x(L,t)=0 & \text{in $(0,T),$} \\
u(x,0)=0 & \text{in $(-L,L)$}.
\end{cases}
\end{equation}
\begin{lemma}\label{gain}
Let $\theta \in [1/4,1]$. If $f \in X_{\theta}\cup \widetilde{X}_{\theta}$, then the solution $u$ of \eqref{ibvp2} belongs to $Y_{\theta}$ and there exists some constant $C>0$ such that 
\begin{align*}
&\|u\|_{Y_{\theta}} \leq C \|f\|_{X_{\theta}}, \quad \text{for $f \in X_{\theta}$}\\
&\|u\|_{Y_{\theta}} \leq C \|f\|_{\widetilde{X}_{\theta}}, \quad \text{for $f \in \widetilde{X}_{\theta}$} 
\end{align*}
\end{lemma}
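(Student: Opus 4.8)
The plan is to reduce the statement, by complex interpolation, to its two endpoint cases and then prove each endpoint directly. Since $\theta$ ranges over $[\tfrac14,1]$, the reiteration theorem for the complex method gives $X_\theta=(X_{1/4},X_1)_{[\eta]}$, $\widetilde X_\theta=(\widetilde X_{1/4},\widetilde X_1)_{[\eta]}$ and $Y_\theta=(Y_{1/4},Y_1)_{[\eta]}$ with $\eta=\tfrac{4\theta-1}{3}\in[0,1]$, and the solution map $f\mapsto u$ of \eqref{ibvp2} is linear; hence it suffices to prove the four endpoint bounds
\[
\|u\|_{Y_{1/4}}\le C\|f\|_{X_{1/4}},\quad \|u\|_{Y_{1/4}}\le C\|f\|_{\widetilde X_{1/4}},\quad \|u\|_{Y_{1}}\le C\|f\|_{X_{1}},\quad \|u\|_{Y_{1}}\le C\|f\|_{\widetilde X_{1}},
\]
and then invoke interpolation. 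This also explains why the statement starts at $\theta=\tfrac14$: $L^1(0,T;L^2)$ is the natural low-regularity endpoint for the data. (One could equally interpolate between $\theta=0$ and $\theta=1$, treating $\theta=0$ by transposition against the backward adjoint problem, but the $\tfrac14$ endpoint is more transparent.) In each case I would first argue for smooth $f$, where the solution exists classically by Theorem~\ref{homog}, and then pass to the general case by density.

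For $\theta=\tfrac14$, where $X_{1/4}=L^2(0,T;H^{-1})$, $\widetilde X_{1/4}=L^1(0,T;L^2)$ and $Y_{1/4}=L^2(0,T;H^1)\cap C([0,T];L^2)$, the bounds come from the basic energy identity. Multiplying \eqref{ibvp2} by $u$ and integrating over $(-L,L)$, the boundary conditions $u(-L,t)=u(L,t)=u_x(L,t)=0$ make the third-order term contribute only the term $\tfrac12 u_x^2(-L,t)$, which has a good sign, so that
\[
\frac{d}{dt}\,\tfrac12\|u(t)\|_{L^2(-L,L)}^2+\|u_x(t)\|_{L^2(-L,L)}^2+\tfrac12 u_x^2(-L,t)=\langle f(t),u(t)\rangle .
\]
For $f\in L^1(0,T;L^2)$ one integrates in $t$ using $u(\cdot,0)=0$, bounds the right-hand side by $\|f\|_{L^1_tL^2_x}\|u\|_{C_tL^2_x}$, and combines this with the Duhamel bound against the contraction semigroup $S_L(\cdot)$ to obtain $\|u\|_{Y_{1/4}}\le C\|f\|_{\widetilde X_{1/4}}$. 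For $f\in L^2(0,T;H^{-1})$ one writes $f=g+\partial_x h$ with $g,h\in L^2((0,T)\times(0,L))$ and $\|g\|_{L^2}+\|h\|_{L^2}\le C\|f\|_{X_{1/4}}$, integrates by parts to move the spatial derivative from $h$ onto $u$ (the boundary term vanishing since $u(\pm L,t)=0$), and closes the estimate by Young's inequality together with Gronwall's lemma for the lower-order part; this gives $\|u\|_{Y_{1/4}}\le C\|f\|_{X_{1/4}}$.

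For $\theta=1$, where $X_1=L^2(0,T;H^2_0)$, $\widetilde X_1=L^1(0,T;H^3\cap H^2_0)$ and $Y_1=L^2(0,T;H^4)\cap C([0,T];H^3)$, one argues at a higher order. The $C([0,T];H^3)$ bound for $f\in\widetilde X_1$ is immediate, since $H^3\cap H^2_0\subset D(A)$, the semigroup $S_L(\cdot)$ is a contraction for the graph norm of $A$, and that norm is equivalent on $D(A)$ to the $H^3(-L,L)$ norm by ellipticity of the boundary value problem defining $A$; for $f\in X_1$ it follows instead from parabolic regularity for $\partial_t-\partial_x^2$ with the dispersive term treated as a source. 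The additional $L^2(0,T;H^4)$ gain is a smoothing estimate obtained, as in \cite{capistrano2015} and as in the computation of Lemma~\ref{porpcarl}, by testing \eqref{ibvp2} against a multiplier of the form $p(x)\,\partial_x^2 u$ with $p$ suitably chosen and integrating by parts.

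I expect this last point to be the main obstacle. Differentiating \eqref{ibvp2} in $x$ does not preserve the domain of $A$ --- only the condition at $x=L$ survives --- so the higher-order boundary terms appearing in the $H^4$ estimate cannot be absorbed by naive differentiation and must be controlled through a careful choice of the weight $p$, which is precisely the bookkeeping already carried out in the proof of Lemma~\ref{porpcarl}, now without the exponential Carleman factor. Once the four endpoint bounds are established, complex interpolation yields both inequalities of the lemma for every $\theta\in[\tfrac14,1]$.
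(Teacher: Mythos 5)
Your overall architecture (prove the two endpoint bounds at $\theta=1/4$ and $\theta=1$, then interpolate) is exactly the paper's, and your $\theta=1/4$ endpoint is essentially identical to the paper's: multiply \eqref{ibvp2} by $u$, use the sign of the boundary term $\tfrac12 u_x^2(-L,t)$, and close by Young's inequality. The gap is at the $\theta=1$ endpoint, and you have correctly located the obstacle but not the resolution. Your two proposed routes there do not close. Treating $u_{xxx}$ as a source for the parabolic operator $\partial_t-\partial_x^2$ is circular, because the dispersive term is of \emph{higher} order than the smoothing part: to place $u_{xxx}$ in $L^2(0,T;H^2)$ you would already need $u\in L^2(0,T;H^5)$, which is more than the $L^2(0,T;H^4)$ you are trying to prove. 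And the multiplier $p(x)\partial_x^2u$ is a Kato-smoothing device that controls at most $u_{xx}$ locally; the computations of Lemma~\ref{porpcarl}, which you invoke for the bookkeeping, likewise stop at $|u_{xx}|^2$ and give no control of $u_{xxx}$ or $u_{xxxx}$, so they cannot deliver the $Y_1=L^2(0,T;H^4)\cap C([0,T];H^3)$ bound.

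The paper's key idea, which is the one missing from your proposal, is to apply the full spatial operator $P=-\partial_x^2+\partial_x^3$ to the solution rather than a single $\partial_x$. Since $Pu=f-u_t$, and since $f\in H^2_0$ together with the time-independent boundary conditions on $u$ forces $(Pu)(-L,t)=(Pu)(L,t)=(Pu)_x(L,t)=0$, the function $Pu$ solves the \emph{same} initial--boundary value problem \eqref{ibvp2} with source $Pf$. Because $f\in L^2(0,T;H^2_0)$ gives $Pf\in L^2(0,T;H^{-1})$ (and $f\in L^1(0,T;H^3\cap H^2_0)$ gives $Pf\in L^1(0,T;L^2)$), the already-established $\theta=1/4$ estimate applies to $Pu$ and yields $Pu\in C([0,T];L^2)\cap L^2(0,T;H^1)$. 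One then recovers $\|u\|_{C([0,T];H^3)\cap L^2(0,T;H^4)}$ from $\|u\|_{C_tL^2\cap L^2_tH^1}$ and $\|Pu\|_{C_tL^2\cap L^2_tH^1}$ by writing $u_{xxx}=Pu+u_{xx}$ and absorbing the intermediate term $u_{xx}$ with an Ehrling-type interpolation inequality (the paper cites \cite[Lemma 8]{simon1986compact}). This completely sidesteps the boundary-condition problem you flagged, because $P$ (unlike $\partial_x$) maps solutions of the IBVP to solutions of the IBVP. Your $C([0,T];H^3)$ argument via the graph norm of $A$ for $f\in\widetilde X_1$ is fine as far as it goes, but it does not produce the $L^2(0,T;H^4)$ component, which is indispensable for the interpolation step that follows.
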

\begin{proof}
In order to prove the Lemma, we follow the same approach developed in \cite{glass2008}. Note that if $f \in L^2(0,T;H^{-1}(0,L))\cup L^1(0,T;L^{2}(0,L))$, then the solution $u$ of \eqref{ibvp2} belongs to $C([0,T];L^{2}(0,L))\cap L^2(0,T;H^{1}(0,L))$. Indeed,  we will suppose that $f$ belongs to $C_0^{\infty} ((0, T) \times (0, L))$ and the general case follows by density. Multiplying \eqref{ibvp2} by $u$ and integrating in $(0,t)\times (-L,L)$ with $t \in (0,T)$, we obtain that 
\begin{align*}
\frac12\int_{-L}^L u^2(t)dx +\int_0^t\int_{-L}^L u_x^2dxds \leq \int_0^t \left\langle f(s), u(s)\right\rangle_{H^{-1}\times H_0^{1}} ds, \quad \text{for $f\in L^2(0,T;H^{-1}(0,L))$}
\end{align*}
or 
\begin{align*}
\frac12\int_{-L}^L u^2(t)dx +\int_0^t\int_{-L}^L u_x^2dxds \leq \int_0^t\int_{-L}^L fudxds, \quad \text{for $f\in L^2(0,T;L^{2}(0,L))$}.
\end{align*}
Taking the supreme in $[0,T]$ and using the Young inequality, there exist a constant $C_1>0$, such that
\begin{equation}\label{g1}
\begin{cases}
\|u\|_{L^{\infty}(0,T;L^{2}(0,L))\cap L^2(0,T;H^{1}(0,L))} \leq C_1 \|f\|_{L^2(0,T;H^{-1}(0,L))}, &  \text{if $f\in L^2(0,T;H^{-1}(0,L))$} \\
\|u\|_{L^{\infty}(0,T;L^{2}(0,L))\cap L^2(0,T;H^{1}(0,L))} \leq C_1 \|f\|_{L^1(0,T;L^{2}(0,L))}, &  \text{if $f\in L^1(0,T;L^{2}(0,L))$} 
\end{cases}
\end{equation}
Now, suppose that $f \in L^2(0,T;H^{2}_0(0,L))\cup L^1(0,T;(H^{3}\cap H^{2}_0)(0,L))$,  we will prove that the solution $u$ of \eqref{ibvp2} belongs to $C([0,T];H^{3}(0,L))\cap L^2(0,T;H^{4}(0,L))$. Again, we first consider $f$ belongs to $C_0^{\infty} ((0, T) \times (0, L))$ and the general case follows by density. Consider the differential operator $$P=-\partial_x^2+\partial_x^3.$$ Let us apply the operator $P$ to the equation \eqref{ibvp2}. Thus, by using the boundary condition of the system and  the fact that $Pu=f-u_t$, it follows that 
\begin{equation*}
\begin{cases}
(Pu)_t -(Pu)_{xx}+(Pu)_{xxx}=Pf & \text{in $ (-L,L) \times (0,T),$} \\
(Pu)(- L,t)=(Pu)(L,t)=(Pu)_x(L,t)=0 & \text{in $(0,T),$} \\
(Pu)(x,0)=0 & \text{in $(-L,L)$}.
\end{cases}
\end{equation*}
Since $Pf \in L^2(0,T;H^{-1}(0,L))\cup L^1(0,T;L^{2}(0,L))$, from \eqref{g1} we infer that
\begin{equation}\label{g2}
\begin{cases}
\|Pu\|_{L^{\infty}(0,T;L^{2}(0,L))\cap L^2(0,T;H^{1}(0,L))} \leq C \|Pf\|_{L^2(0,T;H^{-1}(0,L))}, &  \text{if $Pf\in L^2(0,T;H^{-1}(0,L))$} \\
\|Pu\|_{L^{\infty}(0,T;L^{2}(0,L))\cap L^2(0,T;H^{1}(0,L))} \leq C \|Pf\|_{L^1(0,T;L^{2}(0,L))}, &  \text{if $Pf\in L^1(0,T;L^{2}(0,L))$}, 
\end{cases}
\end{equation}
for some $C>0$. Moreover, note that there exists $C_2>0$, such that
\begin{multline}\label{g3}
\|u\|_{L^{\infty}(0,T;H^3(0,L))\cap L^{2}(0,T;H^4(0,L)) } \\
\leq C_2 \left( \|u\|_{L^{\infty}(0,T;L^2(0,L))\cap L^{2}(0,T;H^1(0,L))} + \|u_{xxx}\|_{L^{\infty}(0,T;L^2(0,L))\cap L^{2}(0,T;H^1(0,L))}\right) \\
\leq C_2\left( \|u\|_{L^{\infty}(0,T;L^2(0,L))\cap L^{2}(0,T;H^1(0,L))} +\|Pu\|_{L^{\infty}(0,T;L^2(0,L))\cap L^{2}(0,T;H^1(0,L))} \right.\\
\left.+\|u_{xx}\|_{L^{\infty}(0,T;L^2(0,L))\cap L^{2}(0,T;H^1(0,L))}\right).
\end{multline}
On the other hand, 
$$L^{\infty}(0,T;H^3(0,L)) \underset{compact}{\hookrightarrow} L^{\infty}(0,T;H^2(0,L)) \hookrightarrow L^{\infty}(0,T;L^2(0,L)) $$ and 
$$ L^{2}(0,T;H^4(0,L)) \underset{compact}{\hookrightarrow}  L^{2}(0,T;H^3(0,L)) \hookrightarrow  L^{2}(0,T;H^1(0,L))$$ 
By using \cite[Lemma 8]{simon1986compact}, we have that 
\begin{multline*}
\|u_{xx}\|_{L^{\infty}(0,T;L^2(0,L))\cap L^{2}(0,T;H^1(0,L)) } \\
\leq \varepsilon \|u\|_{L^{\infty}(0,T;H^3(0,L))\cap L^{2}(0,T;H^4(0,L))} +C(\varepsilon) \|u\|_{L^{\infty}(0,T;L^2(0,L))\cap L^{2}(0,T;H^1(0,L))}.
\end{multline*}
for any $\varepsilon >0$.  Choosing an appropriate $\varepsilon>0$  from \eqref{g1}, \eqref{g2} and \eqref{g3}, we get
\begin{equation}\label{g4}
\begin{cases}
\|u\|_{L^{\infty}(0,T;H^{3}(0,L))\cap L^2(0,T;H^{4}(0,L))} \leq C_3 \|f\|_{L^2(0,T;H^{2}_0(0,L))}, &  \text{if $f\in L^2(0,T;H^{2}_0(0,L))$} \\
\|u\|_{L^{\infty}(0,T;H^{3}(0,L))\cap L^2(0,T;H^{4}(0,L))} \leq C_3 \|f\|_{L^1(0,T;H^{3}(0,L))}, &  \text{if $f\in L^1(0,T;(H^3\cap H^{2}_0)(0,L))$}. 
\end{cases}
\end{equation}
In order to complete the proof, let us define the linear map $A: f \mapsto u$. By \eqref{g1} and \eqref{g4}, $A$ continuously maps $X_{1/4}$ and $\widetilde{X}_{1/4}$ into $Y_{1/4}$, and $X_1$ and $\widetilde{X}_1$ into $Y_1$. Moreover,  the norm of the operator $A$ can be estimate as follows
\begin{equation*}
\begin{array}{l l}
\|A\|_{\LL(X_{1/4}, Y_{1/4})} \leq C_1, & \|A\|_{\LL(\widetilde{X}_{1/4}, Y_{1/4})} \leq C_1, \\
\|A\|_{\LL(X_{1}, Y_{1})} \leq C_3, & \|A\|_{\LL(\widetilde{X}_{1}, Y_{1})} \leq C_3. 
\end{array}
\end{equation*}
From classical interpolation arguments (see \cite{bergh1976}), we have that $A$ continuously maps $X_{\theta}$ and $\widetilde{X}_{\theta}$ to $Y_{\theta}$, for any $\theta \in [1/4, 1$]. Moreover, there exists a positive constant $C$, such tat the corresponding operator norms satisfy
\begin{align*}
\|A\|_{\LL(X_{\theta}, Y_{\theta})} \leq C \quad \text{and} \quad  \|A\|_{\LL(\widetilde{X}_{\theta}, Y_{\theta})} \leq C.
\end{align*}
This completes the proof.
\end{proof}
%%%%%%%%%%%%%%%%%%
\begin{lemma}\label{lemobs}
Let $0<l<L$ and $T>0$, and $s_0$ be as in Proposition \ref{porpcarl}. Then, there exists a positive constant $C$, such that for any $s \geq s_0$ and any $u_0 \in L^2(-L,L)$, the solution $u$ of \eqref{ibvp} satisfies
\begin{equation}\label{g5}
\int_{Q} s^5 \check{\varphi}^5|u|^2 e ^{-2s \hat{\varphi}}dxdt \leq C s^{10} \int_0^T e^{s\left( 6 \hat{\varphi}-8\check{\varphi}\right)} \check{\varphi}^{31} \|u(\cdot,t)\|_{L^2(\omega)}^2 dt,
\end{equation}
where $Q=(0,T)\times(-L,L)$, $\omega=(-l,l)$, 
\begin{equation*}
\hat{\varphi}(t)=\max_{x \in [-L,L]}\frac{\psi(x)}{t(T-t)} =\frac{\psi(0)}{t(T-t)}\quad \text{and} \quad \check{\varphi}(t)=\min_{x \in [-L,L]}\frac{\psi(x)}{t(T-t)} =\frac{\psi(l_3)}{t(T-t)} 
\end{equation*}
\end{lemma}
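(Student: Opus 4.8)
The plan is to deduce \eqref{g5} from the Carleman inequality \eqref{car'} of Lemma~\ref{porpcarl} by first discarding the $x$-dependence of the weights on both sides and then eliminating the local $|u_x|^2$- and $|u_{xx}|^2$-terms on the right of \eqref{car'}. As in the proof of Lemma~\ref{porpcarl}, it suffices to argue for $u_0\in D(A)$, so that $u\in C([0,T];D(A))\cap C^1([0,T];L^2(-L,L))$ by Theorem~\ref{homog}, and then pass to the general case by density. Write $\varphi(x,t)=\psi(x)/(t(T-t))$, so $\check\varphi\le\varphi\le\hat\varphi$; by \eqref{carl1}--\eqref{carl5} the minimum of $\psi$ over $[-L,L]$ equals $\psi(l_3)$ and is attained in $\omega$, one has $\hat\varphi\le\frac{4}{3}\check\varphi$ (this is \eqref{carl5}), and $\varphi$ is bounded below by a positive constant on $Q$.

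\emph{Step 1 (reduction to a $t$-only weight).} Since $\check\varphi\le\varphi$ and $e^{-2s\hat\varphi}\le e^{-2s\varphi}$ pointwise, the integrand on the left of \eqref{g5} is $\le s^5\varphi^5|u|^2e^{-2s\varphi}$, so the left-hand side of \eqref{g5} is bounded by the left-hand side of \eqref{car'}, hence by its right-hand side. On the observation set $\omega_0$ — which we choose with $\omega_0\subset\subset\omega=(-l,l)$ — one has $e^{-2s\varphi}\le e^{-2s\check\varphi}$ and $s^5\varphi^5,\,s^3\varphi^3,\,s\varphi\le C\,s^5\check\varphi^5$ for $s\ge s_0$, so it is enough to bound $\int_0^T s^5\check\varphi^5 e^{-2s\check\varphi}\|u(\cdot,t)\|_{H^2(\omega_0)}^2\,dt$ by the right-hand side of \eqref{g5}. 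Since $6\hat\varphi-8\check\varphi=-2\check\varphi+6(\hat\varphi-\check\varphi)\le 0$ (by $\hat\varphi\le\frac{4}{3}\check\varphi$), the target weight $s^{10}\check\varphi^{31}e^{s(6\hat\varphi-8\check\varphi)}$ stays integrable near $t=0,T$, so the inequality is of the right shape; what remains is to trade the $H^2(\omega_0)$-norm of $u$ for its $L^2(\omega)$-norm, at the cost of the extra polynomial factor $s^{5}\check\varphi^{26}$ and the extra exponential factor $e^{6s(\hat\varphi-\check\varphi)}$.

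\emph{Step 2 (killing the derivatives).} Following \cite{capistrano2015,glass2008}, fix a finite increasing chain of intervals from $\omega_0$ up to $\omega$, cut-offs $\chi$ supported in these and $\equiv1$ on the next smaller one, and time-weights $\rho(t)$ of the form $s^{a}\check\varphi^{b}e^{-s\hat\varphi}$ with $\rho(0)=\rho(T)=0$. For such $\chi,\rho$ the function $w:=\rho\chi u$ vanishes at $t=0$ and, together with $\partial_x w$, at $x=\pm L$, and solves \eqref{ibvp2} with source
\begin{equation*}
g=\rho'\chi u+\rho\bigl(3\chi'u_{xx}+(3\chi''-2\chi')u_x+(\chi'''-\chi'')u\bigr)=\partial_x^2(3\rho\chi'u)+\partial_x(\cdots u)+(\cdots)u ,
\end{equation*}
the last expression coming from integration by parts in $x$; the coefficients there are smooth and supported in $\omega$, so $g$ is controlled by $u$ itself on $\omega$. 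Applying Lemma~\ref{gain} to \eqref{ibvp2} converts local $L^2$- (or $H^1$-) control of $u$ into $H^2$-control of $w$ (recall $Y_\theta$ has two more $x$-derivatives than $X_\theta$); starting from the energy estimate $u\in L^2(0,T;H^1(-L,L))$ of Theorem~\ref{homog} and iterating along the chain of cut-offs — using at each step an interpolation inequality $\|v_x\|_{L^2(\omega')}\le\delta\|v_{xx}\|_{L^2(\omega'')}+C_\delta\|v\|_{L^2(\omega'')}$ with $\omega'\subset\subset\omega''$, and absorbing the higher-order pieces left over — one arrives at
\begin{equation*}
\int_0^T s^5\check\varphi^5 e^{-2s\check\varphi}\,\|u(\cdot,t)\|_{H^2(\omega_0)}^2\,dt\ \le\ C\,s^{10}\int_0^T e^{s(6\hat\varphi-8\check\varphi)}\check\varphi^{31}\,\|u(\cdot,t)\|_{L^2(\omega)}^2\,dt .
\end{equation*}
Combined with Step~1 this gives \eqref{g5}.

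\emph{The main obstacle} is precisely the bookkeeping in Step~2: one must organize the chain of cut-offs and the weights $\rho$ so that simultaneously (i) each $w=\rho\chi u$ has vanishing initial and lateral data; (ii) each source $g$ lands in the admissible range $\theta\in[1/4,1]$ of Lemma~\ref{gain}; (iii) the regularity gained at one step is exactly what makes the source of the next step admissible — this is what keeps the iteration non-circular and makes it terminate after finitely many steps; and (iv) the weight accumulated through the iteration is no worse than $Cs^{10}e^{s(6\hat\varphi-8\check\varphi)}\check\varphi^{31}$. The factor $e^{6s(\hat\varphi-\check\varphi)}$ is essentially unavoidable because, to make a source integrable, one must replace the local weight $e^{-s\check\varphi}$ by the global weight $e^{-s\hat\varphi}$ and convert back, each round of the iteration costing $e^{2s(\hat\varphi-\check\varphi)}$; and condition \eqref{carl5}, $\hat\varphi\le\frac{4}{3}\check\varphi$, is exactly what is needed to keep the resulting exponent $6\hat\varphi-8\check\varphi$ non-positive.
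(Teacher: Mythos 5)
Your Step 2 does not close, and the obstruction is structural rather than a matter of bookkeeping. When you set $w=\rho\chi u$ with a spatial cut-off $\chi$, the source $g$ necessarily contains the commutator term $3\rho\chi'u_{xx}$: the operator $\partial_t-\partial_x^2+\partial_x^3$ commuted with $\chi$ loses two $x$-derivatives, while Lemma~\ref{gain} gains exactly two $x$-derivatives in the $L^2_t H^s_x$ scale (e.g.\ $X_{1/4}=L^2(0,T;H^{-1})\mapsto Y_{1/4}=L^2(0,T;H^1)$, $X_{1/2}=L^2(L^2)\mapsto Y_{1/2}=L^2(H^2)$). So the net gain per iteration is zero: starting from the energy regularity $u\in L^2(0,T;H^1)$ you can only place $\chi'u_{xx}=\partial_x(\chi'u_x)-\chi''u_x$ in $X_{1/4}$, which returns $w\in L^2(0,T;H^1)$ and nothing more; to reach $X_{1/2}$ you would already need $u_{xx}\in L^2$ on $\supp\chi'$, which is the regularity you are trying to prove on a slightly larger set. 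Your divergence-form rewriting $g=\partial_x^2(3\rho\chi'u)+\cdots$ only makes this worse: it places $g$ in $X_0=L^2(0,T;H^{-2})$, outside the admissible range $\theta\in[1/4,1]$ of Lemma~\ref{gain}. You correctly identify the non-circularity of the iteration as ``the main obstacle,'' but you do not resolve it, and with spatial cut-offs it cannot be resolved.

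The paper's proof avoids this by never cutting off in $x$. The bootstrap is applied to $u_k=\theta_k(t)u$ on all of $(-L,L)$, with $\theta_k$ a pure time weight of the form $\check\varphi^{a_k}e^{-s\hat\varphi}$; since $\theta_k$ commutes with the spatial operator, the source is $f_k=\theta_{k,t}u=(\theta_{k,t}\theta_{k-1}^{-1})u_{k-1}$ and contains \emph{no} $x$-derivatives of $u$, so each application of Lemma~\ref{gain} genuinely upgrades regularity ($L^2\to H^2\to H^{7/3}\to H^{8/3}$ after interpolation). The localization to $\omega$ is instead performed \emph{before} the bootstrap: following the integrations by parts (3.30)--(3.40) of \cite{capistrano2015}, the local $|u_x|^2$- and $|u_{xx}|^2$-terms on the right of \eqref{car'} are converted into the main term $Cs^{10}\int_0^T e^{s(6\hat\varphi-8\check\varphi)}\check\varphi^{31}\|u\|^2_{L^2(\omega)}\,dt$ plus a Young-type remainder $2\varepsilon s^{-2}\int_0^T e^{-2s\hat\varphi}\check\varphi^{-9}\|u\|^2_{H^{8/3}(\omega)}\,dt$, and it is only this remainder that the global bootstrap controls, after which it is absorbed into the left-hand side of \eqref{g5} by choosing $\varepsilon$ small. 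Your Step 1, by contrast, tries to carry the full $H^2(\omega_0)$-norm over to the $L^2(\omega)$-norm, which is precisely the step the paper's argument is designed to avoid.
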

\begin{proof}
With  Lemma \ref{gain} in hands, we can follow the same approach as in \cite{capistrano2015} and \cite{glass2008} with minor changes. In fact, by using the estimates (3.30)-(3.40) in the proof of \cite[Lemma 3.7]{capistrano2015}, we have that 
\begin{multline}\label{g6}
\int_{Q} s^5 \check{\varphi}^5|u|^2 e ^{-2s \hat{\varphi}}dxdt  \leq C s^{10} \int_0^T e^{s\left( 6 \hat{\varphi}-8\check{\varphi}\right)} \check{\varphi}^{31} \|u(\cdot,t)\|_{L^2(\omega)}^2 dt \\
+ 2\varepsilon s^{-2}\int_0^T e^{-2s \hat{\varphi}} \check{\varphi}^{-9} \|u(\cdot,t)\|_{H^{8/3}(\omega)}^2 dt,
\end{multline}
for any $\varepsilon>0$. From here, we denote by $C$, the different positive constants which may vary from place to place. Next, we will estimate adequately the integral term 
\begin{equation*}
\int_0^T e^{-2s \hat{\varphi}} \check{\varphi}^{-9} \|u(\cdot,t)\|_{H^{8/3}(\omega)}^2 dt.
\end{equation*}
This is done by a bootstrap argument based on the smoothing effect of the KdV-Burgers equation given by Lemma \ref{gain}.  Indeed, consider $u_1(x,t):=\theta_1(t)u(x,t)$, where
\begin{equation*}
\theta_1(t)=\check{\varphi}^{-1/2}e^{-s\hat{\varphi}}.
\end{equation*}
Thus, $u_1$ is the solution of
\begin{equation}\label{ibvp3}
\begin{cases}
u_{1,t} -u_{1,xx}+u_{1,xxx}=f_1 & \text{in $ (-L,L) \times (0,T),$} \\
u(- L,t)=u(L,t)=u_x(L,t)=0 & \text{in $(0,T),$} \\
u(x,0)=0 & \text{in $(-L,L)$}.
\end{cases}
\end{equation}
with $f_1(x,t)= \theta_{1,t}(t)u(x,t)$. Since $|\theta_{1,t}(t)|\leq Cs \check{\varphi}^{3/2}e^{-s\hat{\varphi}}$, we have that $f \in L^2((0,T)\times (-L,L))$ with
\begin{equation}\label{gg1-5}
\|f_1 \|^2_{L^2((0,T)\times (-L,L)}\leq Cs^2 \int_Q \check{\varphi}^{3}e^{-2s\hat{\varphi}}|u|^2dxdt,
\end{equation}
for some constant $C>0$ and all $s\geq s_0$.   Then by Lemma \ref{gain}, $u_1\in Y_{1/2}=L^2(0,T;H^2(-L,L)) \cap L^{\infty}(0,T;H^1(-L,L))$. Thus, interpolating over theses spaces, we obtain that $u_1$ belongs to space $L^4(0,T;H^{3/2}(-L,L))$ and
\begin{equation}\label{gg2-5}
\|u_1\|_{L^4(0,T;H^{3/2}(-L,L))}\leq \|f\|_{L^2((0,T)\times (0,L))}.
\end{equation}
Now, consider $u_2(x,t):=\theta_2(t)u(x,t)$, where
\begin{equation*}
\theta_2(t)=\check{\varphi}^{-5/2}e^{-s\hat{\varphi}}.
\end{equation*}
Then $u_2$ is solution of \eqref{ibvp3} with $f_2=\theta_{2,t}(t)u(x,t)$ instead $f_1$. Note that $f_2=\theta_{2,t}(t)\theta^{-1}_1(t)u_1(x,t)$, therefore since $s$ is large, from \eqref{esti} the following estimate holds $$|\theta_{2,t}(t)\theta^{-1}_1(t)|\leq Cs.$$ Thus, we have that
\begin{equation*}
\|f_2\|_{L^2(0,T; H^{1/3}(-L,L))} \leq C s \|u_1\|_{L^2(0,T; H^{1/3}(-L,L))}.
\end{equation*}
By using the embedding $H^{3/2}(-L,L)\hookrightarrow H^{1/3}(-L,L)$ and Holder inequality, it follows that  $f_2$ belongs to $X_{7/12}=L^2(0,T; H^{1/3}(-L,L))$ and, consequently,  
\begin{equation}\label{g7}
\|f_2\|^2_{L^2(0,T; H^{1/3}(-L,L))} \leq C T^{1/2} s^2 \|u_1\|^2_{L^4(0,T; H^{3/2}(-L,L))}.
\end{equation}
Thus by Lemma \ref{gain}, $u_2$ belongs to $Y_{7/12}=L^2(0,T;H^{7/3}(-L,L)) \cap L^{\infty}(0,T;H^{4/3}(-L,L))$ and 
\begin{equation}
\|u_2\|_{L^2(0,T;H^{7/3}(-L,L)) \cap L^{\infty}(0,T;H^{4/3}(-L,L))} \leq C \|f_2\|_{L^2(0,T; H^{1/3}(-L,L))}
\end{equation}\label{g8}
Finally, let $u_3(x,t):=\theta_3(t)u(x,t)$, where
\begin{equation*}
\theta_3(t)=\check{\varphi}^{-9/2}e^{-s\hat{\varphi}}.
\end{equation*}
Thus $u_3$ is solution of \eqref{ibvp3} with $f_3=\theta_{3,t}(t)u(x,t)$ instead $f_1$. Note that $f_2=\theta_{3,t}(t)\theta^{-1}_2(t)u_2(x,t)$ and $$|\theta_{3,t}(t)\theta^{-1}_2(t)|\leq Cs.$$ Thus, we have that
\begin{equation}\label{g9}
\|f_3\|_{L^2(0,T; H^{2/3}(-L,L))}  \leq C s \|u_2\|_{L^2(0,T; H^{2/3}(-L,L))}.
\end{equation}
As above, by the embedding  $H^{7/3}(-L,L)\hookrightarrow H^{2/3}(-L,L)$, we have that $f_3 \in X_{8/12}$. Thus, by Lemma \ref{gain}, $u_3$ belongs to $Y_{8/12}=L^2(0,T;H^{8/3}(-L,L)) \cap L^{\infty}(0,T;H^{5/3}(-L,L))$ with 
\begin{equation}\label{g10}
\|u_3\|_{L^2(0,T;H^{8/3}(-L,L)) \cap L^{\infty}(0,T;H^{5/3}(-L,L))} \leq C \|f_3\|_{L^2(0,T; H^{2/3}(-L,L))}.
\end{equation}
From \eqref{gg2-5}-\eqref{g10}, it yields that
\begin{equation}\label{g11}
\|u_3\|^2_{L^2(0,T;H^{8/3}(-L,L))} \leq C T^{1/2}s^4 \|f_1\|^2_{L^2((0,T)\times (-L,L))}.
\end{equation}
Then, by \eqref{gg1-5}, \eqref{g11} for $s_0$ large enough, we have  
\begin{equation*}
\int_0^T e^{-2s \hat{\varphi}} \check{\varphi}^{-9} \|u(\cdot,t)\|_{H^{8/3}(\omega)}^2 dt \leq C T^{1/2}s^2 \int_Q s^5\check{\varphi}^{5}e^{-2s\hat{\varphi}}|u|^2dxdt,
\end{equation*}
for some positive constant $C$ and for all $s \geq s_0$. Then, picking $\varepsilon = \frac{1}{4C T^{1/2}}$ in \eqref{g6}, the proof is completed.
\end{proof}

%-------
\begin{proof}[\textbf{Proof of Proposition \ref{observ}}] After the change of  variables $v(x,t)=u(T-t,L-x)$, we have
\begin{equation}\label{ibvp4}
\begin{cases}
-v_t -v_{xx}-v_{xxx}=0 & \text{in $ (0,2L) \times (0,T),$} \\
v(0,t)=v(2L,t)=v_x(0,t)=0 & \text{in $(0,T),$} \\
v(x,0)=u_0(L-x) & \text{in $(0,2L)$}.
\end{cases}
\end{equation}
Scaling in \eqref{ibvp4} by $v$ and integrating over $(0, 2L)$, it follows that
\begin{align*}
-\frac12\frac{\partial}{\partial t} \int_0^{2L} |v(t)|^2dx + \int_0^{2L} |v_x(t)|^2dx +\frac{1}{2}v_x^2(2L) = 0.
\end{align*}
Integrating over $[0,\tau]$, with $\tau \in [T/3, 2T/3]$, we get 
\begin{align*}
\|v(0)\|^2_{L^2(0,2L)}\leq \|v(\tau)\|^2_{L^2(0,2L)},
\end{align*}
Integrating again over $[T/3, 2T/3]$, the following estimate holds
\begin{equation*}
\|v(0)\|^2_{L^2(0,2L)}\leq \frac{3}{T}\int_{\frac{T}{3}}^{\frac{2T}{3}}\|v(\tau)\|^2_{L^2(0,2L)}d\tau.
\end{equation*}
Pick any $s$ large enough. Thus we obtain
\begin{align*}
\int_{\frac{T}{3}}^{\frac{2T}{3}}\|v(\tau)\|^2_{L^2(0,2L)}d\tau =\int_{\frac{T}{3}}^{\frac{2T}{3}}\|u(t)\|^2_{L^2(-L,L)}dt \leq C_1 \int_{0}^{T} \int_{-L}^{L}  s^5 \check{\varphi}^5|u|^2 e ^{-2s \hat{\varphi}}dxdt  
\end{align*}
where $C_1= \left[ \min_{t \in [T/3,2T,3]} \{ s^5\check{\varphi}^5 e ^{-2s \hat{\varphi}}\} \right]^{-1} >0.$ By Lemma \ref{lemobs}, it follows that 
\begin{align*}
\|v(0)\|^2_{L^2(0,2L)} \leq C s^{10} \int_0^T e^{s\left( 6 \hat{\varphi}-8\check{\varphi}\right)} \check{\varphi}^{31} \|u(\cdot,t)\|_{L^2(\omega)}^2 dt.
\end{align*}
Noting that $\hat{\varphi}< \frac{4}{3}\check{\varphi}$ it easy to see that the maximum of the function $\chi(t)= e^{s\left( 6 \hat{\varphi}(t)-8\check{\varphi}(t)\right)} \check{\varphi}^{31}(t)$ is attained in $T/2$ for $s$ large enough. Thus, we have that 
\begin{align}\label{g12}
\|v(0)\|^2_{L^2(0,2L)} \leq C \int_0^T  \|u(\cdot,t)\|_{L^2(\omega)}^2 dt,
\end{align}
where $C=C(s,T)>0$. Finally, by a simple change of variable in \eqref{g12}, it follows that 
\begin{align*}
\|u\|^2_{L^2((0,T)\times(-L,L))} & \leq C T \|u_0\|^2_{L^2(-L,L)} = \|v(0)\|^2_{L^2(0,2L)} \leq  C T \|u\|_{L^2( (0,T)\times \omega)}^2.
\end{align*}
This concludes the proof. 
\end{proof}
%%%%%%%%%%%%%%%%%%%%%%%%%%%%%%%%%%%%%%%%%%%%%%%%%%%%%%%%%%%%%%%%%%%%%%%%%%%%%%%%%%%%%%%%%%%%%%%%%%%%%%%%
\section{Proof of the Main Result}\label{proofmainresult}
%%%%%%%%%%%%%%%%%%%%%%%%%%%%%%%%%%%%%%%%%%%%%%%%%%%%%%%%%%%%%%%%%%%%%%%%%%%%%%%%%%%%%%%%%%%%%%%%%%%%%%%%
The next Proposition is carried out as in \cite[Proposition 4.1]{rosier2000} and its proof uses the internal observability \eqref{observ1} and an Approximation theorem, see the Appendix. The proof is sketched in the appendix.
\begin{prop}\label{propmain}
Let $t_1, t_2,T$ such that $0<t_1<t_2<T$ and let  $f=f(t,x)$ be any function such that 
$$f \in L^2_{loc}(\R^2) \qquad \text{and}\qquad \supp f \subset [t_1,t_2] \times \R.$$  Let $\varepsilon>0$ such that $$\varepsilon < \min (t_1,T-t_2).$$ Then, there exists $u \in L^2_{loc}(\R^2)$ such that
\begin{equation}\label{aa5}
u_t-u_{xx}+u_{xxx}=f \quad \text{in $\DD'(\R^2)$}
\end{equation}
and
\begin{equation}\label{aa6}
\supp u \subset [t_1-\varepsilon,t_2+\varepsilon]\times \R.
\end{equation}
\end{prop}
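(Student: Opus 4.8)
The plan is to obtain $u$ as a weak limit of solutions to forced problems on the growing intervals $(-L,L)$, using a uniform $L^2_{\mathrm{loc}}$ bound furnished by the observability inequality of Proposition~\ref{observ}, exactly in the spirit of Rosay's proof of Malgrange--Ehrenpreis and Rosier's adaptation in \cite{rosier2000}. First, for each $L>l$ (with $\omega=(-l,l)$ fixed) I would solve the controlled problem
\begin{equation*}
\begin{cases}
u^L_t-u^L_{xx}+u^L_{xxx}=f+g^L\chi_\omega & \text{in }(-L,L)\times(0,T),\\
u^L(-L,t)=u^L(L,t)=u^L_x(L,t)=0 & \text{in }(0,T),\\
u^L(x,0)=0 & \text{in }(-L,L),
\end{cases}
\end{equation*}
choosing the interior control $g^L$ (by the HUM/duality argument, whose dual estimate is precisely the observability inequality \eqref{observ1} applied to the adjoint system, cf. the change of variables in the proof of Proposition~\ref{observ}) so that $u^L(\cdot,T)=0$. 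Because $f$ is supported in $[t_1,t_2]$ and $\varepsilon<\min(t_1,T-t_2)$, one can in fact arrange, as in \cite{rosier2000}, that $u^L\equiv 0$ near $t=0$ and near $t=T$, so after extending $u^L$ by zero in $t$ we get $\supp u^L\subset[t_1-\varepsilon,t_2+\varepsilon]\times(-L,L)$, i.e.\ \eqref{aa6} holds for each truncated solution.

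The key quantitative point is a bound on $u^L$ that is uniform in $L$ on every fixed space-time box. Here is where Proposition~\ref{observ} enters in its controllability guise: the HUM control $g^L$ has norm controlled by $\|f\|$ through the observability constant $C=C(l,L,T)$, and one must check — tracking the dependence in the Carleman weight and in Lemma~\ref{lemobs} — that this constant can be taken \emph{independent of $L$} once $L\ge l+1$, say (the weight $\psi$, the times $T/3,2T/3$, and the exponential factors all depend only on $l$ and $T$, not on the outer endpoint). Standard energy estimates for \eqref{ibvp2} then give $\|u^L\|_{L^2((0,T)\times(-L,L))}\le C(\|f\|_{L^2}+\|g^L\|_{L^2(\omega\times(0,T))})\le C\|f\|_{L^2(\R^2)}$ with $C$ independent of $L$. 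Extracting a weakly convergent subsequence $u^{L_k}\rightharpoonup u$ in $L^2_{\mathrm{loc}}(\R^2)$, passing to the limit in the distributional formulation of the equation (the boundary terms escape to infinity and the cutoff $\chi_\omega$-term limits to some $L^2_{\mathrm{loc}}$ right-hand side that is absorbed — more precisely, Rosier's argument arranges the controls so the limit solves the \emph{homogeneous} equation away from $\supp f$, which is what \eqref{aa5} asserts), and noting that the support condition \eqref{aa6} is preserved under weak limits, yields the desired $u$.

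Actually, the cleaner route, and the one I expect the author follows via the Appendix, is the functional-analytic one: set up the bilinear form on a space of test functions supported in $[t_1-\varepsilon,t_2+\varepsilon]\times\R$, use the observability inequality \eqref{observ1} (localized to $(-L,L)$ and then shown $L$-uniform) as a coercivity/unique-continuation-type estimate to define a bounded linear functional $\varphi\mapsto\int f\varphi$ on the range of the adjoint operator $P^*$, extend by Hahn--Banach, and represent it by the sought $u\in L^2_{\mathrm{loc}}$; this is the ``approximation theorem'' mechanism. In either formulation the single real obstacle is the \textbf{uniformity in $L$} of the observability constant: one must re-examine Lemma~\ref{porpcarl}, Lemma~\ref{lemobs} and the final integration in the proof of Proposition~\ref{observ} and verify that $s_0$ and $C$ depend only on $(l,T)$ — which holds because the observation region $\omega$ and the construction of $\psi$ on $[-L,L]$ can be normalized near $\omega$ so that all the constants $k_1,\dots,k_7$ and the exponential weights are controlled independently of the outer boundary, the boundary contributions at $x=\pm L$ having favorable sign. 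Granting that, the weak-compactness/limit argument is routine, and the support statement \eqref{aa6} is immediate from the time-localization of the controls combined with $\varepsilon<\min(t_1,T-t_2)$.
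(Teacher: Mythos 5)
Your proposal hinges on a single quantitative claim — that the observability constant of Proposition \ref{observ} (equivalently, the norm of the HUM control $g^L$) can be taken independent of $L$ once $\omega=(-l,l)$ is fixed — and that claim is not established in the paper, is the genuine difficulty of the problem, and is almost certainly false. Take $u_0$ concentrated near $x=L-1$: over the fixed time window $(0,T)$ only a vanishing fraction of its mass reaches the fixed observation region $\omega$ as $L\to\infty$, so the constant $C(l,L,T)$ in \eqref{observ1} must degenerate. Concretely, the Carleman weight $\psi$ is built on all of $[-L,L]$ subject to \eqref{carl1}--\eqref{carl5} (in particular $\psi(-L)\le\frac43\psi(l_3)$ and sign conditions on $\psi',\psi''$ on $[-L,L]\setminus\omega$), so the constants $k_1,\dots,k_7$, the threshold $s_0$, and the exponential factors $e^{s(6\hat\varphi-8\check\varphi)}$ in Lemma \ref{lemobs} all depend on $L$ through $\psi$ and its derivatives; nothing in the proofs normalizes them away. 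Your fallback ``cleaner route'' via Hahn--Banach on the range of $P^*$ is the mechanism behind Proposition \ref{corocarl} (existence of a compactly-time-supported solution on a \emph{fixed} bounded interval), not a proof of Proposition \ref{propmain} on the whole line; it runs into the same uniformity issue when you try to let $L\to\infty$.

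The paper's proof is designed precisely to avoid any uniform-in-$L$ bound. It is an iterative, Rosay/Malgrange--Ehrenpreis-type construction (Claim \ref{claim3}): one builds $u_n$ on $(0,T)\times(-n,n)$ solving $Pu_n=f$ with $\supp u_n\subset[t_1^n,t_2^n]\times(-n,n)$, and passes from $n$ to $n+1$ by (i) solving $Pw=f$ on the larger interval $(-n-1,n+1)$ with the correct time support via Proposition \ref{corocarl}, (ii) noting $P(u_n-w)=0$ on $(-n,n)$, and (iii) invoking the Approximation Theorem \ref{aprothm} — a Runge-type result resting on Lemma \ref{approlema} (Holmgren uniqueness plus the semigroup $S_L$) and the observability inequality on the \emph{fixed} interval at each stage — to approximate $u_n-w$ by a genuine solution $v$ on $(-n-1,n+1)$ to within $2^{-n-1}$ on $(-n+1,n-1)$. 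Since the approximation error at each step can be made arbitrarily small regardless of how large the constants at that step are, the resulting sequence $\widetilde u_n$ is Cauchy in $L^2_{loc}(\R^2)$ even though the norms $\|u_n\|_{L^2((0,T)\times(-n,n))}$ need not stay bounded. This telescoping smallness replaces the uniform bound your weak-compactness argument requires. As written, your proof has a gap that cannot be patched by ``re-examining the constants''; it needs to be restructured along these iterative lines.
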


\begin{proof}[\textbf{Proof of the Main Result, Theorem \ref{main}}]
Let $u_0, u_T \in L^2(\R)$, and consider the differential operator $A=\partial_x^2-\partial_x^3$ with domain $D(A)=H^3(\R)$. It is well known that $A$ generates a $C_0$ semigroup of contraction $S(\cdot)$ on $L^2(\R)$. Thus, if $u_0, u_T \in L^2(\R)$,  $u_1(t)=S(t)u_0$ and $u_2(t)=S(t)u_T$ are the solutions of 
\begin{equation*}
\begin{cases}
\partial_tu_1-\partial_x^2 u_1 +\partial_x^3 u_1=0 & \text{in $(0,T)\times \R$} \\
u_1(0,x)=u_0(x) & \text{in $\R$}
\end{cases}
\quad \text{and} \quad 
\begin{cases}
\partial_tu_2-\partial_x^2 u_2 +\partial_x^3 u_2=0 & \text{in $(0,T)\times \R$} \\
u_2(0,x)=u_T(x) & \text{in $\R$}.
\end{cases}
\end{equation*}
Respectively. For any $\varepsilon' \in (\varepsilon, T/2)$, consider the function $\varphi \in C^{\infty}([0,T])$ given by 
\begin{equation}\label{cuttoff1}
\varphi(t) = \begin{cases}
1 & \text{if $t\leq \varepsilon'$} \\
0 & \text{if $t\geq T- \varepsilon'$}.
\end{cases}
\end{equation}
Note that the change of variable 
\begin{equation*}
u(t)=\varphi(t)u_1(t)+(1-\varphi(t))u_2(t) + w(t)
\end{equation*}
transforms \eqref{linearcontrol} in
\begin{equation}\label{linearcontrol2}
\begin{cases}
w_t-w_{xx}+w_{xxx}=\frac{d\varphi}{dt}(u_2-u_1) & \text{in $\DD'(\R\times (0,T))$} \\
w(x,0) = w(x,T) = 0 & \text{in $\R$}
\end{cases}
\end{equation}
we finish the proof by applying the Proposition \ref{propmain} written $f(t,x) = \frac{d\varphi}{dt}(t)(u_2(t,x)-u_1(t,x))$. 
\end{proof}

\section{Further comments}\label{furthercoments}

\noindent Finally, let us make some comments:
\vglue 0.2cm
1. As mentioned before, the Theorem \ref{main2} can be obtained with some minor changes of the proof of the Theorem \ref{main}. Indeed, it is easy to sse that the operators $A=\partial_x^2-\partial_x^3$ and $B=-\partial_x^2-\partial^3_x$ generate a semigroups of contraction on $L^2(0, \infty)$ and $L^2((0,\infty), e^{2bx}dx)$ for $b\geq \frac13$, respectively (for instance see \cite[Lemma 2.1]{pazoto2010}). Then, taking $u_0$ in $L^2(0,\infty)$ and $u_T$ in $L^2((0,\infty), e^{-2bx}dx)$, there exist mild solutions $u_1$ and $u_2$ of the problems,  
\begin{equation*}
\partial_t u_1 - \partial_x^2 u_1 - \partial_x^3 u_1 =0 , \quad \text{and}\quad \partial_t u_2 + \partial_x^2 u_2 +\partial_x^3 u_2 =0 , \quad \text{in $\R\times (0,T)$},
\end{equation*}
with initials data
\begin{equation*}
u_1(x,0)=\begin{cases}
u_0(x) & \text{for a.e $x >0$}, \\
0& \text{for a.e $x <0$}, 
\end{cases}, \quad \text{and}\quad 
u_2(x,0)=\begin{cases}
0& \text{for a.e $x >0$}, \\
u_T(-x) & \text{for a.e $x <0$}.
\end{cases},
\end{equation*}
With this solutions in hand, we proceed as in proof of \cite[Theorem 1.3]{rosier2000}. Thus, consider the change of function $\widetilde{u}_2(x,t)= u_2(-x,T-t)$.  Clearly,  $\partial_t \widetilde{u}_2 - \partial_x^2 \widetilde{u}_2 + \partial_x^3 \widetilde{u}_2 =0$ with $\widetilde{u}_2(x,T)=u_T(x)$ on $(0, \infty)$. In order to obtain the desired result, it is sufficient consider the  change of variable
\begin{equation*}
u(t)=\varphi(t)u_1(t)+(1-\varphi(t))\widetilde{u}_2(t)+\widetilde{w}(t), 
\end{equation*}
where $\varphi$ is the cut off function defined by \eqref{cuttoff1} and $\widetilde{w}$ is the solution of the Cauchy problem given by the Proposition \ref{propmain} with $f(x,t) = \frac{d\varphi}{dt}(t)(\widetilde{u}_2(x,t)-u_1(x,t))$. 
\vglue 0.3cm

2. The major difference with  Rosier work is the internal observability. The techniques used to prove the Proposition \ref{observ} are different from those used in the proof of the observability inequality for the KdV equation. More precisely, we developed a Carleman inequality which allow us to prove directly the observability as in \cite{capistrano2015} and \cite{glass2008}. It seems difficult to use the \textit{compactness–uniqueness argument} and \textit{the Ingham's inequality approach} used by Rosier in \cite{rosier2000,rosier}, due to the lack of $L^2(\R)$-estimates and the differential operator nature associated to KdV-Burger equation, respectively. 
\vglue 0.3cm
3. An important remark is about the approximate controllability for Pde's in $L^2(\Omega)$, when $\Omega$ is an unbounded domain. In this case, the approximate  controllability problem has a positive answer. The (simple) proof of the next Proposition can be found in the appendix of \cite{rosier2000}. 
\begin{prop}
Consider a (real) constant coefficients differential operator $Au=\sum_{i=0}^{n} a_i \frac{d^iu}{dx^i}$, with domain $D(A)=\left\lbrace u \in L ^2(\R): Au \in L^2(\R)\right\rbrace$. Assume that $n\geq 2$ (with $a_n \neq 0$) and that $A$ generates a continuous semigroup $\{S(t)\}_{t\geq 0}$ on $L^2(\R)$. Let $T>0$ and $L_1 <L_2$ be some numbers. Set 
\begin{equation*}
\mathcal{R}=\left\lbrace \int_0^TS(T-t)f(\cdot,t)dt; f \in L^2(\R^2), \supp f \subset [L_1,L_2]\times [0,T]  \right\rbrace,
\end{equation*}
where $\supp f$ denotes the support of $f$. Then $\mathcal{R}$ is a strict dense subspace of $L^2(\R)$.
\end{prop}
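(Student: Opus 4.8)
The plan is to establish density and strictness of $\mathcal R$ separately, both by duality. For \emph{density}, I would invoke the Hahn--Banach theorem and show that $\mathcal R^{\perp}=\{0\}$. Given $g\in L^2(\R)$ orthogonal to $\mathcal R$, I would unwind the orthogonality relation using Fubini and the adjoint semigroup $S^*(\cdot)$, generated by $A^*=\sum_i(-1)^i a_i\,d^i/dx^i$: since $\langle g,S(T-t)h\rangle=\langle S^*(T-t)g,h\rangle$, orthogonality of $g$ to $\int_0^T S(T-t)f(\cdot,t)\,dt$ for every admissible $f$ amounts to $\int_0^T\langle S^*(T-t)g,f(\cdot,t)\rangle\,dt=0$; letting $f$ range over products $\phi(t)h(x)$ with $\phi\in C_c^{\infty}(0,T)$ and $\supp h\subset[L_1,L_2]$, this forces $S^*(\tau)g\equiv 0$ on $[L_1,L_2]$ for a.e.\ $\tau\in(0,T)$. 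Thus density reduces to the unique continuation statement: \emph{if $S^*(\tau)g$ vanishes on a fixed interval for a.e.\ $\tau$ in a time interval, then $g=0$.}

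To close this, I would pass to the Fourier side, where $S^*(\tau)$ acts as multiplication by $e^{\tau\overline{P(\xi)}}$ with $P(\xi)=\sum_i a_i(i\xi)^i$ the symbol of $A$. Since $A$ generates a continuous semigroup, $\operatorname{Re}P(\xi)$ is bounded above; and for the operators of interest---most importantly $A=\partial_x^2-\partial_x^3$, whose adjoint has symbol $-\xi^2-i\xi^3$---one has in fact $\operatorname{Re}P(\xi)\to-\infty$ at polynomial rate, so $e^{\tau\overline{P(\xi)}}$ decays faster than any exponential in $\xi$ for each fixed $\tau>0$. A direct estimate then shows that $v(\tau,\cdot):=S^*(\tau)g$ extends to an entire function of $x$ for every $\tau>0$; an entire function vanishing on an interval is $0$, so $v(\tau,\cdot)\equiv0$ for a.e.\ $\tau\in(0,T)$, and strong continuity of $\tau\mapsto S^*(\tau)g$ lets me send $\tau\to 0^+$ to conclude $g=v(0)=0$. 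I expect this smoothing / unique-continuation step to be the main obstacle: it is immediate in the parabolic regime that covers $A=\partial_x^2-\partial_x^3$, which is all that is needed here, but for a general constant-coefficient $A$ (for instance a purely dispersive one) it would instead require a separate, less elementary unique continuation theorem, and the statement must be read in that light.

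For \emph{strictness}, I would write $\mathcal R$ as the range of the control--to--state map $\Lambda:L^2([L_1,L_2]\times[0,T])\to L^2(\R)$, $\Lambda f=\int_0^T S(T-t)f(\cdot,t)\,dt$, compute its adjoint $(\Lambda^*g)(t,\cdot)=(S^*(T-t)g)|_{[L_1,L_2]}$, and argue by contradiction: if $\mathcal R=L^2(\R)$ then $\Lambda$ is surjective, hence $\Lambda^*$ is bounded below, which is exactly an observability inequality $\|g\|_{L^2(\R)}^2\le C\int_0^T\|S^*(\tau)g\|_{L^2(L_1,L_2)}^2\,d\tau$. Testing it on $g_n$ with $\widehat{g_n}=\chi_{[n,n+1]}$ and using $|e^{\tau\overline{P(\xi)}}|^2=e^{2\tau\operatorname{Re}P(\xi)}$ together with $\operatorname{Re}P(\xi)\to-\infty$ gives $\int_0^T\|S^*(\tau)g_n\|_{L^2(L_1,L_2)}^2\,d\tau\le\int_0^T\|S^*(\tau)g_n\|_{L^2(\R)}^2\,d\tau\to0$ while $\|g_n\|_{L^2(\R)}\sim1$, a contradiction; hence $\mathcal R\subsetneq L^2(\R)$. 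As an alternative I could note directly that every element of $\mathcal R$ is real-analytic on $\R\setminus[L_1,L_2]$, again by the smoothing of $S(s)$ for $s>0$, so that any $L^2$ function with a jump at a point outside $[L_1,L_2]$ lies outside $\mathcal R$.
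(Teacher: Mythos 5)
The paper does not actually prove this proposition; it defers to the appendix of Rosier's paper \cite{rosier2000}, so that is the proof your attempt should be measured against. Your duality skeleton is exactly the right one and coincides with Rosier's: density reduces to the unique continuation statement ``$S^*(\tau)g=0$ on $(L_1,L_2)$ for a.e.\ $\tau\in(0,T)$ implies $g=0$'', and strictness reduces, via surjectivity of $\Lambda\Rightarrow\Lambda^*$ bounded below, to disproving the observability inequality. The genuine gap is in how you discharge the unique continuation step. You derive it from $\operatorname{Re}P(\xi)\to-\infty$, i.e.\ from parabolic smoothing, but this is \emph{not} a consequence of the stated hypotheses ($n\geq 2$, $a_n\neq 0$, generation of a $C_0$ semigroup): the prototype for this proposition in \cite{rosier2000} is the purely dispersive operator $A=-\partial_x-\partial_x^3$, whose symbol has $\operatorname{Re}P\equiv 0$, so $S^*(\tau)g$ is not entire and your argument collapses precisely on the case the proposition was designed for. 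You cannot ``read the statement in that light'': the hypotheses single out exactly the class for which the correct tool is Holmgren's uniqueness theorem. Since $n\geq 2$, the only characteristic hyperplanes of $\partial_t-A^*$ are $\{t=\mathrm{const}\}$; sweeping a family of non-characteristic surfaces with boundaries kept inside $(L_1,L_2)\times(0,T)$ propagates the vanishing of $v(\tau)=S^*(\tau)g$ to all of $\R\times(0,T)$, and $\tau\to 0^+$ then gives $g=0$. This is the mechanism the paper itself invokes for Lemma \ref{approlema}, and it covers your parabolic case as well. For the specific operator of this paper, $A=\partial_x^2-\partial_x^3$, your entire-extension argument is correct, so your proof establishes the proposition only for the parabolic subclass.

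The same restriction infects your strictness argument: $\int_0^T\|S^*(\tau)g_n\|_{L^2(\R)}^2\,d\tau\to 0$ again uses $\operatorname{Re}P\to-\infty$ and fails for unitary groups. The fix that works at the stated level of generality, and is simpler, is translation invariance: take $g_k=g_0(\cdot-k)$ for a fixed bump $g_0$; since $A$ has constant coefficients, $S^*(\tau)g_k=(S^*(\tau)g_0)(\cdot-k)$, hence $\int_0^T\|S^*(\tau)g_k\|_{L^2(L_1,L_2)}^2\,d\tau=\int_0^T\|S^*(\tau)g_0\|_{L^2(L_1-k,L_2-k)}^2\,d\tau\to 0$ by dominated convergence while $\|g_k\|_{L^2(\R)}$ is constant, contradicting observability. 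Finally, I would delete your ``alternative'' strictness remark: the claim that every element of $\mathcal{R}$ is real-analytic off $[L_1,L_2]$ is unjustified even for the KdV--Burgers semigroup, whose kernel has only one-sided exponential tails of order $e^{-|x|/3}$ (this is exactly why the weight $e^{-2bx}$ with $b\geq\frac13$ appears in Theorem \ref{main2}), and the $s\to 0$ end of the integral $\int_0^T S(T-t)f(t)\,dt$ destroys the uniform factorial derivative bounds that real-analyticity would require.
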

\vglue 0.3cm
4.  It turns out that the exact boundary controllability of the linear KdV in $L^2(0,+\infty)$ fails to be true if we restrict ourselves to solutions with bounded energy, that is, which belong to $L^{\infty}(0, T, L^2(0,+\infty))$. An implicit formulation (that is, without specification of the boundary conditions) of this fact is given in  \cite[Theorem 1.2]{rosier2000},  which shows that even the (boundary) null-controllability fails
to be true for solutions with bounded energy. This phenomenon is unknown for the linear KdV-Burgers equation \eqref{e1-5}.  Furthermore, like for the KdV equation, the nonlinear case remains a open problem. 

%%%%%%%%%%%%%%%%%%%%%%%%%%%%%%%%%%%%%%%%%%%%%%%%%%%%%%%%%%%%%%%%%%%%%%%%%%%%%%%%%%%%%%%%%%%%%%%%%%%%%%%%
\section{Appendix: Proof of Proposition \ref{propmain}}\label{appendix}
%%%%%%%%%%%%%%%%%%%%%%%%%%%%%%%%%%%%%%%%%%%%%%%%%%%%%%%%%%%%%%%%%%%%%%%%%%%%%%%%%%%%%%%%%%%%%%%%%%%%%%%%
In this section, we present the proof of the Proposition \ref{propmain}, it is based on an approximation theorem which has some resemblance to a result obtained in \cite[Lemma 4.4]{rosier2000}. 
\vglue 0.3cm
In order to obtain our goal in this section, we establish some results. The first of them is a global Carleman  inequality for the operator $-P^{*}=\partial_t+\partial_x^2+\partial^3_x$ proved in  \cite[Lemma 3.5]{gallegopazoto2015}.
\begin{lemma}
Let $T$ and $L$ be positive numbers. Then, there exist a smooth positive function $\psi$ on $[-L, L]$ (which depends on L) and positive constants $s_0 = s_0(L, T)$ and $C = C(L, T)$, such that, for all $s \geq s_0$ and any
\begin{equation}\label{e2-5}
q \in C^3([0,T]\times [-L,L])
\end{equation}
satisfying 
\begin{equation}\label{e3-5}
\text{$q(t, \pm L) = q_x(t,  \pm L) = q_{xx}(t, \pm L) = 0$, for $0 \leq t \leq  T$},
\end{equation}
we have that
\begin{multline}\label{car}
\int_{0}^{T}\int_{-L}^{L}\left\lbrace \frac{s^5}{t^5(T-t)^5}|q|^2+\frac{s^3}{t^3(T-t)^3}|q_x|^2+\frac{s}{t(T-t)}|q_{xx}|^2\right\rbrace e ^{-\frac{2s\psi(x)}{t(T-t)}}dxdt \\
\leq C \int_{0}^{T}\int_{-L}^{L}|q_t+q_{xx}+q_{xxx}|^2e ^{-\frac{2s\psi(x)}{t(T-t)}}dxdt.
\end{multline}
\end{lemma}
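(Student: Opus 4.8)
The plan is to run the classical Fursikov--Imanuvilov conjugation argument, which here is essentially the one already carried out for Lemma~\ref{porpcarl}, with two simplifying features: the second--order term carries a $+$ sign (which only perturbs lower--order bundles, not the leading terms), and the \emph{overdetermined} boundary conditions \eqref{e3-5} make every boundary contribution at $x=\pm L$ vanish, so that, unlike in Lemma~\ref{porpcarl}, no localized term survives on the right--hand side.

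\textbf{Step 1 (weight).} Choose $\psi\in C^{\infty}([-L,L])$ with $\psi>0$, $\psi'<0$, $\psi''<0$ and $\psi'\psi'''<0$ on $[-L,L]$ (such a profile exists since $[-L,L]$ is an interval and no interior observation region has to be accommodated), and set $\varphi(t,x)=\psi(x)/(t(T-t))$. Then $\varphi\gtrsim 1$ and $\varphi\to\infty$ as $t\to0^{+}$ or $t\to T^{-}$, and, as in \eqref{esti}, $|\varphi_{t}|\le C\varphi^{2}$, $|\varphi_{tt}|\le C\varphi^{3}$, $|\partial_{x}^{k}\varphi|\le C_{k}\varphi$.

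\textbf{Step 2 (conjugation and splitting).} Put $g:=q_{t}+q_{xx}+q_{xxx}$ and $v:=e^{-s\varphi}q$; since $\varphi$ blows up at $t=0,T$ and $q\in C^{3}$, $v$ and its relevant derivatives vanish at $t=0,T$, and by \eqref{e3-5} one has $v=v_{x}=v_{xx}=0$ at $x=\pm L$. Expanding $e^{-s\varphi}(e^{s\varphi}v)_{t}$, $e^{-s\varphi}(e^{s\varphi}v)_{xx}$, $e^{-s\varphi}(e^{s\varphi}v)_{xxx}$ exactly as in the proof of Lemma~\ref{porpcarl} gives $e^{-s\varphi}g=L_{1}v+L_{2}v$ with $L_{1}v=v_{t}+v_{xxx}+Bv_{x}$ and $L_{2}v=Av+Cv_{xx}$, where $A,B,C$ are polynomials in $s$ and in the $x,t$--derivatives of $\varphi$ of the same shape as \eqref{e134'}--\eqref{e136'} (only the $\varphi_{xx}$--term inside $A$ changes sign, because $-\partial_x^2$ is replaced by $+\partial_x^2$). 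Then
\begin{equation*}
2\,(L_{1}v,L_{2}v)_{L^{2}(Q)}=\|e^{-s\varphi}g\|_{L^{2}(Q)}^{2}-\|L_{1}v\|_{L^{2}(Q)}^{2}-\|L_{2}v\|_{L^{2}(Q)}^{2}\le\|e^{-s\varphi}g\|_{L^{2}(Q)}^{2},
\end{equation*}
with $Q=[0,T]\times[-L,L]$, so it suffices to bound $(L_{1}v,L_{2}v)_{L^{2}(Q)}$ from below by a constant times the claimed weighted norm of $v$.

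\textbf{Step 3 (the cross term) and Step 4 (return to $q$).} Expand $(L_{1}v,L_{2}v)_{L^{2}(Q)}$ by integrating by parts in $x$ and in $t$, exactly as between \eqref{e4.6'} and \eqref{e137'}. All boundary integrals in $t$ vanish because $v$ and its derivatives vanish at $t=0,T$; \emph{all} boundary integrals in $x$ vanish because $v=v_{x}=v_{xx}=0$ at $x=\pm L$ — this is the one place where \eqref{e3-5} is used, and the reason no $\int_{\omega}$ term appears. Collecting the surviving volume terms, the dominant ones are, up to positive constants, $-s^{5}\varphi_{x}^{4}\varphi_{xx}\,v^{2}$, $9s^{2}(\varphi_{xx}^{2}-3\varphi_{x}\varphi_{xxx})\,v_{x}^{2}$ and $-s\varphi_{xx}\,v_{xx}^{2}$, which by $\psi''<0$ and $\psi'\psi'''<0$ are $\ge c\,s^{5}\varphi^{5}v^{2}$, $c\,s^{2}\varphi^{2}v_{x}^{2}$, $c\,s\varphi\,v_{xx}^{2}$ respectively, while every remaining term is $O(s^{4}\varphi^{4}v^{2})+O(s\varphi^{2}v_{x}^{2})+O(\varphi\,v_{xx}^{2})$ by Step~1 and hence is absorbed for $s\ge s_{0}$. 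One then upgrades the $v_{x}$--weight from $s^{2}\varphi^{2}$ to $s^{3}\varphi^{3}$ exactly as in the passage after \eqref{e140} (write $\int s^{3}\varphi^{3}v_{x}^{2}=-\int s^{3}\varphi^{3}vv_{xx}-\int3s^{3}\varphi^{2}\varphi_{x}vv_{x}$ and use Young's inequality). This yields $\int_{Q}\{s^{5}\varphi^{5}|v|^{2}+s^{3}\varphi^{3}|v_{x}|^{2}+s\varphi|v_{xx}|^{2}\}\,dxdt\le C\int_{Q}e^{-2s\varphi}|g|^{2}\,dxdt$. Finally substitute back $v=e^{-s\varphi}q$, $v_{x}=e^{-s\varphi}(q_{x}-s\varphi_{x}q)$, $v_{xx}=e^{-s\varphi}(q_{xx}-2s\varphi_{x}q_{x}-s\varphi_{xx}q+s^{2}\varphi_{x}^{2}q)$; the cross terms so produced are of strictly lower weight than the leading $s^{5}\varphi^{5}e^{-2s\varphi}q^{2}$ and $s^{3}\varphi^{3}e^{-2s\varphi}q_{x}^{2}$ terms and are absorbed for $s\ge s_{0}$, which is \eqref{car}.

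The main obstacle is purely computational: the full expansion of $(L_{1}v,L_{2}v)$ produces a long list of volume terms, and one must verify that with the sign pattern $\psi'<0$, $\psi''<0$, $\psi'\psi'''<0$ the three top--order terms (weights $s^{5}\varphi^{5}$, $s^{3}\varphi^{3}$ after the upgrade, and $s\varphi$) are simultaneously nonnegative while everything else is genuinely of lower order in $s$. The conceptual content is only the choice of $\psi$ and the remark that \eqref{e3-5} kills every boundary term, so that — in contrast with Lemma~\ref{porpcarl} — the estimate closes with no interior contribution; for this reason the proof can be presented as that of Lemma~\ref{porpcarl}, \emph{mutatis mutandis}.
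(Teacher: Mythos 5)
The paper does not actually prove this lemma --- it is quoted from \cite[Lemma 3.5]{gallegopazoto2015} --- but your argument is the standard Fursikov--Imanuvilov conjugation that the paper itself carries out in full for Lemma \ref{porpcarl}, and it is correct: the overdetermined conditions \eqref{e3-5} do kill every boundary term, the sign flips caused by replacing $-\partial_x^2$ by $+\partial_x^2$ affect only subleading terms (slightly more than just the $\varphi_{xx}$-term in $A$: also the $s^2\varphi_x^2$ term in $A$, the $2s\varphi_x$ term in $B$ and the constant in $C$, all harmless), and a monotone concave $\psi$ with $\psi'\psi'''<0$ makes the three top-order volume terms coercive on all of $Q$. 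This is essentially the same approach as the paper's template, so no further comparison is needed.
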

The following result uses the global Carleman inequality to obtain solutions to the KdV-Burgers equation posed, on $\R$, in the distribution sense.
\begin{prop}\label{corocarl}
Let $L>0$ and let $f=f(t,x)$ be any function such that $$ f \in L^2(\R\times (-L,-L)), \qquad \supp f \subset [t_1,t_2] \times (-L,L),$$ where $-\infty<t_1<t_2<+\infty$. Then, for any $\varepsilon>0$, there exists a positive constant $C$ and a function $v \in L^2(\R\times (-L,L))$ such that
\begin{equation}\label{e7}
v_t-v_{xx}+v_{xxx}=f \quad \text{in $\DD'(\R\times(-L,L))$},
\end{equation}
\begin{equation}\label{e8-5}
\supp v \subset [t_1-\varepsilon,t_2+\varepsilon]\times (-L,L),
\end{equation}
\begin{equation}\label{e9-5}
\|v\|_{L^2(\R\times (-L,L))} \leq C\|f\|_{L^2(\R\times (-L,L))}.
\end{equation}
\end{prop}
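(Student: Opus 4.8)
The plan is to run the classical Fursikov--Imanuvilov duality argument in the streamlined form used by Rosier, taking the Carleman estimate of the preceding Lemma as the only analytic ingredient. First I would normalise the time interval: replacing $t$ by $t-(t_1-\varepsilon)$ and setting $T_0:=t_2-t_1+2\varepsilon$, the Lemma applies on $(0,T_0)\times(-L,L)$ and, back in the original variable, provides the weight
$$\eta(t,x):=\exp\!\Big(-\tfrac{2s\psi(x)}{(t-t_1+\varepsilon)(t_2+\varepsilon-t)}\Big),$$
which stays bounded below by a positive constant on $[t_1,t_2]\times[-L,L]$ and tends to $0$ as $t\to(t_1-\varepsilon)^+$ or $t\to(t_2+\varepsilon)^-$. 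Write $-P^*=\partial_t+\partial_x^2+\partial_x^3$ for the operator appearing in the Lemma, and let $P_0$ denote the space of $q\in C^3([t_1-\varepsilon,t_2+\varepsilon]\times[-L,L])$ with $q=q_x=q_{xx}=0$ at $x=\pm L$. On $P_0$ I would put the bilinear form $a(q,\tilde q):=\int_{t_1-\varepsilon}^{t_2+\varepsilon}\int_{-L}^{L}(-P^*q)(-P^*\tilde q)\,\eta\,dx\,dt$; by the Carleman inequality, $a(q,q)=0$ forces $-P^*q\equiv 0$ and hence $q\equiv 0$, so $a$ is an inner product, and I let $H$ be the completion of $(P_0,a)$.

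Next I would control the functional $\ell(q):=-\int_{t_1}^{t_2}\int_{-L}^{L}fq\,dx\,dt$ on $(P_0,a)$. Since $\eta$ is bounded below on $[t_1,t_2]\times[-L,L]$ and, for the fixed $s\ge s_0$, the factor $s^5/\big((t-t_1+\varepsilon)^5(t_2+\varepsilon-t)^5\big)$ is bounded below there as well, the left-hand side of the Carleman estimate dominates $\int_{t_1}^{t_2}\int_{-L}^{L}|q|^2$, whence
$$|\ell(q)|\le\|f\|_{L^2(\R\times(-L,L))}\,\|q\|_{L^2([t_1,t_2]\times(-L,L))}\le C\,\|f\|_{L^2(\R\times(-L,L))}\,a(q,q)^{1/2}.$$
The Riesz representation theorem then yields $\hat q\in H$ with $a(\hat q,q)=\ell(q)$ for all $q\in P_0$ and $a(\hat q,\hat q)^{1/2}\le C\|f\|_{L^2}$. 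The map $q\mapsto -P^*q$ extends to an isometry from $H$ into $L^2\big((t_1-\varepsilon,t_2+\varepsilon)\times(-L,L);\eta\,dx\,dt\big)$; denoting by $g$ the image of $\hat q$, I would set $v:=g\,\eta$ on the slab $(t_1-\varepsilon,t_2+\varepsilon)\times(-L,L)$ and $v:=0$ on the rest of $\R\times(-L,L)$.

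It then remains to verify the three conclusions. Property \eqref{e8-5} holds by construction. For \eqref{e9-5}, since $0<\eta\le 1$ we get $\|v\|_{L^2(\R\times(-L,L))}^2=\int\!\!\int g^2\eta^2\le\int\!\!\int g^2\eta=a(\hat q,\hat q)\le C\|f\|_{L^2}^2$. For \eqref{e7}, take $\phi\in C_c^\infty(\R\times(-L,L))$; its restriction to $[t_1-\varepsilon,t_2+\varepsilon]\times[-L,L]$ belongs to $P_0$, so
$$\int_{\R}\!\int_{-L}^{L}v\,(-P^*\phi)\,dx\,dt=\int_{t_1-\varepsilon}^{t_2+\varepsilon}\!\int_{-L}^{L}g\,(-P^*\phi)\,\eta\,dx\,dt=a(\hat q,\phi)=\ell(\phi)=-\int_{\R}\!\int_{-L}^{L}f\phi\,dx\,dt,$$
using that $v$ vanishes off the slab and that $\supp f\subset[t_1,t_2]\times(-L,L)$; since $\langle Pv,\phi\rangle=-\langle v,(-P^*)\phi\rangle$, this is exactly \eqref{e7}. (The sign of $\ell$ is chosen so as to land on $+f$; the opposite sign would give $-f$.)

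I expect the one genuinely delicate point to be the positive-definiteness of $a$ — i.e. that $H$ and the Riesz step make sense — together with checking that the abstract element $\hat q\in H$ really produces an honest $L^2$ function $v$ through multiplication by the weight $\eta$; both are precisely what the Carleman Lemma is there to supply. The remaining ingredients — the translation to the interval $(0,T_0)$, absorbing the powers of $(t-t_1+\varepsilon)(t_2+\varepsilon-t)$ for fixed $s$, and the absence of boundary contributions at $t=t_1-\varepsilon$ or $t=t_2+\varepsilon$ (because $\eta$ vanishes there and no integration by parts in $t$ is performed in the last display) — are routine.
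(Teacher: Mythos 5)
Your argument is correct and is exactly the route the paper intends: the paper's proof of this proposition is a one-line reference to the duality/Riesz-representation scheme of \cite[Corollary 3.2]{rosier2000}, which is precisely the construction you carry out (bilinear form $a(q,\tilde q)=\int(-P^*q)(-P^*\tilde q)\,\eta$ built from the global Carleman weight, Riesz representation, and $v=(-P^*\hat q)\,\eta$ extended by zero). The sign bookkeeping, the lower bound for $\eta$ on $[t_1,t_2]$, and the absence of temporal boundary terms are all handled correctly.
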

\begin{proof}
With the Global Carleman estimate \eqref{car} in hands, we use the same approach as in \cite[Corollary 3.2]{rosier2000}) with minor changes. %Indeed, we may assume without loss of generality that $0 = t_1 - \varepsilon < t_1 < t_2 < t_2 + \varepsilon = T$. The Carleman inequality \eqref{car} implies that 
\end{proof}

Next, we state a lemma, which may be seen as a preliminary version of the approximation Theorem \ref{aprothm} (below). Since the characteristic hyperplanes of the  linear KdV-Burgers equation  take the form $\{t = Const\}$, by using the Holmgren's uniqueness theorem, the proof of the lemma is word for word the same as the one given  for the KdV equation \cite[Lemma 4.2]{rosier2000}, hence we omit it.
\begin{lemma}\label{approlema}
Let $l_1,l_2,L,t_1, t_2, T$ be numbers, such that $0 < l_1 < l_2 < L$ and $0 < t_1 < t_2 < T$. Let $u \in L^2((0, T) \times (-l_2, l_2))$ be such that 
$$Pu = 0 \quad \text{in $(0, T) \times (-l_2, l_2)$} \quad \text{and} \quad \supp u \subset [t_1, t_2] \times (-l_2, l_2).$$ 
Then, for any $0 < 2\delta < \min(t_1, T- t_2)$ and $\eta >0$, there exist $v_1,v_2 \in L^2(-L, L)$ and $v \in L^2((0, T) \times (-L, L))$ satisfying
\begin{equation}\label{e16-5}
 Pv = 0 \quad \text{in $(0, T) \times (-L,L)$},
\end{equation}
\begin{equation} \label{e17-5}
v(t)=S_L(t-t_1+2\delta)v_1 \quad \text{for} \quad t_1-2\delta<t<t_1-\delta,
\end{equation}
\begin{equation}\label{e18-5}
v(t)=S_L(t-t_2-\delta)v_2 \quad \text{for} \quad t_2+\delta<t<t_2+2\delta,
\end{equation}
and
\begin{equation}\label{e19-5}
\|v -u\|_{L^2((t_1-2\delta,t_2+2\delta)\times(-l_1,l_1))} < \eta,
\end{equation}
where $P$ is the differential operator given by $P=\partial_t-\partial_x ^2+\partial_x^3$ and  $S_L(\cdot)$ is the $C_0$ semigroup of contraction in $L^2(-L,L)$ generated by \eqref{A}.
\end{lemma}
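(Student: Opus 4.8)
The plan is to follow the strategy of Rosier's proof of \cite[Lemma 4.2]{rosier2000}, adapting it to the KdV--Burgers operator $P=\partial_t-\partial_x^2+\partial_x^3$, whose characteristic hyperplanes are still of the form $\{t=\mathrm{Const}\}$. First I would set up the functional framework: fix $L>l_2$ and work on the interval $(-L,L)$, where the boundary conditions $v(-L,t)=v(L,t)=v_x(L,t)=0$ associated with the operator $A$ of \eqref{A} make $S_L(\cdot)$ available. The key point is a duality/Hahn--Banach argument. One wants to approximate $u$ in $L^2((t_1-2\delta,t_2+2\delta)\times(-l_1,l_1))$ by a solution $v$ of $Pv=0$ on all of $(0,T)\times(-L,L)$ that, in addition, \emph{propagates freely via $S_L$} near the time slices $t_1-2\delta$ and $t_2+2\delta$ (this is what \eqref{e17-5}--\eqref{e18-5} encode, and it is exactly the feature that will later let one glue solutions together in Proposition \ref{propmain}).

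The core step is to characterize the closure, in $L^2((t_1-2\delta,t_2+2\delta)\times(-l_1,l_1))$, of the set of such ``admissible'' solutions $v$, and to show $u$ lies in it. By the Hahn--Banach theorem it suffices to show: if $g\in L^2((t_1-2\delta,t_2+2\delta)\times(-l_1,l_1))$ is orthogonal to every admissible $v$, then $\langle g,u\rangle=0$. Extending $g$ by zero, consider the adjoint (backward) problem $-P^*\varphi = \partial_t\varphi+\partial_x^2\varphi+\partial_x^3\varphi$ — precisely the operator for which the Carleman estimate \eqref{car} and Proposition \ref{corocarl} are available — with source $g$ supported in the small cylinder. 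Solving it (using Proposition \ref{corocarl} applied to $-P^*$, or rather its time-reversed form, to get an $L^2$ solution with controlled support in time), the orthogonality of $g$ to all admissible $v$ forces the adjoint solution $\varphi$ to vanish identically in the region where $v$ ranges freely, in particular on an open set where $P^*\varphi=0$. Then Holmgren's uniqueness theorem — applicable because the hyperplanes $\{t=\mathrm{Const}\}$ are non-characteristic for $P^*$ — propagates this vanishing across the slab $(t_1-2\delta,t_2+2\delta)\times(-l_2,l_2)$, whence $\langle g,u\rangle = \langle \varphi\big|_{\text{source side}},\dots\rangle$ reduces, via integration by parts and the hypothesis $Pu=0$ with $\supp u\subset[t_1,t_2]\times(-l_2,l_2)$, to $0$.

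After the abstract approximation is in place, I would produce $v_1,v_2$ concretely: on the time interval $(t_1-2\delta,t_1-\delta)$ the approximating $v$ satisfies $Pv=0$ with the homogeneous boundary conditions, so $v(t)=S_L(t-(t_1-2\delta))v(t_1-2\delta)$ and one sets $v_1:=v(t_1-2\delta)\in L^2(-L,L)$; similarly $v_2:=v(t_2+\delta)$, giving \eqref{e17-5}--\eqref{e18-5} directly from well-posedness (Theorem \ref{homog}). The estimate \eqref{e19-5} is just the approximation accuracy $\eta$ extracted from the density statement.

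\textbf{Main obstacle.} The delicate part is not the Holmgren step itself but arranging the geometry so that it applies: one must choose the auxiliary width $L$, the intermediate radius $l_2$, and the time margins $2\delta$ so that the region where orthogonality kills $\varphi$ is an \emph{open} set reaching down to (and slightly below) the support slab of $u$, and so that the unique continuation carries all the way to the cylinder $(t_1,t_2)\times(-l_2,l_2)$ without being blocked by the lateral boundaries $x=\pm L$. Equivalently, one must be careful that the ``admissible'' class of $v$'s is rich enough — large support in space-time, free evolution near the end slices — that its annihilator is forced to vanish on a genuinely open set; this is exactly where Rosier's construction in \cite[Lemma 4.2]{rosier2000} is used verbatim, which is why the statement says the proof is ``word for word the same'' and is omitted.
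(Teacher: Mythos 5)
Your overall architecture — approximate $u$ by the class of solutions that evolve freely under $S_L(\cdot)$ near the end slices, reduce to a Hahn--Banach/duality statement, solve the adjoint problem with source $g$, kill the adjoint solution by Holmgren, and then read off $v_1,v_2$ from the semigroup representation on $(t_1-2\delta,t_1-\delta)$ and $(t_2+\delta,t_2+2\delta)$ — is exactly the approach the paper takes, which is to say it defers verbatim to Rosier's proof of \cite[Lemma 4.2]{rosier2000}; so in substance you and the paper agree.

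One point in your justification is backwards and would derail the Holmgren step if executed as written: you assert that the hyperplanes $\{t=\mathrm{Const}\}$ are \emph{non}-characteristic for $P^*$. They are characteristic (the principal part of $P^*$ is $\partial_x^3$, whose symbol vanishes on the conormal direction $(\tau,\xi)=(1,0)$), and indeed the paper's one-line remark before the lemma says precisely that the characteristic hyperplanes are the $\{t=\mathrm{Const}\}$. Holmgren's theorem does not propagate vanishing \emph{across} a characteristic surface; the argument works because these horizontal hyperplanes are the \emph{only} characteristic ones, so the vanishing of the adjoint solution is swept through the relevant region by deforming along the non-characteristic surfaces $\{x=\mathrm{Const}\}$ (John's global form of Holmgren). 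With that correction, and with the adjoint solution produced so that it has the right support in time (via the backward well-posedness/Proposition \ref{corocarl} machinery, as you indicate), your sketch matches the omitted proof.
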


Now, we can establish the Approximation theorem, which differs from the approximation theorem in \cite{rosay1991} by an additional property on the support of the solution.
\begin{thm}[Approximation Theorem]\label{aprothm}
Let $n \in \N \setminus \{0,1\}$, $t_1, t_2, T$ be numbers, such that $0 < t_1 < t_2 < T$, and let $u \in L^2((0, T) \times (-n, n))$ be such that 
$$u_t-u_{xx}+u_{xxx} = 0 \quad \text{in $(0, T) \times (-n, n)$} \quad \text{and} \quad \supp u \subset [t_1, t_2] \times (-n, n).$$ Then, for any $0 < \varepsilon < \min(t_1, T- t_2)$, there exists
$v \in L^2((0, T) \times (-n - 1, n + 1))$ satisfying  
\begin{equation}\label{a1}
 v_t-v_{xx}+v_{xxx} = 0 \quad \text{in $(0, T) \times (-n - 1, n + 1)$},
\end{equation}
\begin{equation}\label{a2}
\supp v \subset [t_1 - \varepsilon, t_2 + \varepsilon]  \times (-n - 1, n + 1),
\end{equation}
and
\begin{equation}\label{a3}
\|v -u\|_{L^2((0,T )\times(-n+1,n-1))} < \varepsilon.
\end{equation}
\end{thm}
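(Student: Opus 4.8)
The plan is to obtain $v$ as an infinite superposition (not a single application) of the two building blocks already established: Proposition \ref{corocarl}, which produces compactly-supported-in-time solutions of the inhomogeneous KdV--Burgers equation on a slab $\R\times(-L,L)$ with a quantitative $L^2$ bound, and Lemma \ref{approlema}, which approximates a compactly supported solution on a thin slab by a solution on a wider slab whose time-traces near $t_1$ and $t_2$ are given by the semigroup $S_L(\cdot)$ applied to $L^2$ data. This is exactly the Rosay/Malgrange--Ehrenpreis iteration scheme as used in \cite[proof of Lemma 4.4]{rosier2000}, now decorated with the support condition \eqref{a2}. First I would fix the geometry: set $L_0=n$, $L_k=n+1-2^{-k}$, so $L_k\uparrow n+1$, and choose a decreasing sequence $\varepsilon_k\downarrow 0$ with $\sum_k\varepsilon_k<\varepsilon$ and $2\varepsilon_k<\min(t_1,T-t_2)$ for all $k$; the pair $(t_1-\varepsilon_k,t_2+\varepsilon_k)$ shrinks back to $(t_1,t_2)$.

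The iteration runs as follows. Start with $u_0:=u$, a solution on $(0,T)\times(-L_0,L_0)$ with $\supp u_0\subset[t_1,t_2]\times(-L_0,L_0)$. Given $u_k$ on the slab of half-width $L_k$ with support in $[t_1-\sum_{j<k}\varepsilon_j\,,\,t_2+\sum_{j<k}\varepsilon_j]$, apply Lemma \ref{approlema} (with the inner interval $(-L_{k-1},L_{k-1})$, outer interval $(-L_{k+1},L_{k+1})$, the time-widening parameter $\delta=\delta_k$ small, and tolerance $\eta_k$ chosen later) to get $\tilde v_{k}$ on the wider slab which agrees with a semigroup trajectory near the new endpoints and satisfies $\|\tilde v_k-u_k\|_{L^2((\cdots)\times(-L_{k-1},L_{k-1}))}<\eta_k$. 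The difference $w_k:=\tilde v_k-u_k$ solves $Pw_k=0$ only on the \emph{smaller} slab; to correct the mismatch on the annular region and to keep things compactly supported in time, I use a cutoff $\chi_k(x)$ equal to $1$ on $(-L_{k-1},L_{k-1})$ and supported in $(-L_k,L_k)$, write $Pu_{k+1}=0$ with $u_{k+1}:=\tilde v_k-$ (a correction), and feed the commutator term $[P,\chi_k]w_k$ — which is $L^2$, supported in $[t_1-\varepsilon',t_2+\varepsilon']\times(-L_k,L_k)$ by the semigroup endpoint description in \eqref{e17-5}--\eqref{e18-5} — into Proposition \ref{corocarl} on the slab of half-width $L_{k+1}$. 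That produces $z_k\in L^2(\R\times(-L_{k+1},L_{k+1}))$ with $Pz_k=[P,\chi_k]w_k$, $\supp z_k\subset[t_1-\sum_{j\le k}\varepsilon_j\,,\,t_2+\sum_{j\le k}\varepsilon_j]\times(-L_{k+1},L_{k+1})$ and $\|z_k\|_{L^2}\le C_k\|[P,\chi_k]w_k\|_{L^2}\le C_k'\eta_k$. Set $u_{k+1}:=\chi_k\tilde v_k - z_k$; then $Pu_{k+1}=0$ on the full slab of half-width $L_{k+1}$, $u_{k+1}$ has support in the widened time interval, and $\|u_{k+1}-u_k\|_{L^2((0,T)\times(-L_{k-1},L_{k-1}))}\le \eta_k+C_k'\eta_k$.

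Finally I choose $\eta_k$ so small (depending on the constants $C_k'$ from Proposition \ref{corocarl}, which are fixed once the slab widths and $\varepsilon_k$ are fixed) that $\sum_k(1+C_k')\eta_k<\varepsilon/2$ and moreover each tail $\sum_{j\ge k}(1+C_j')\eta_j$ is summable; then on every fixed compact slab $(0,T)\times(-m,m)$ with $m<n+1$ the sequence $(u_k)$ is eventually defined and Cauchy in $L^2$, so it converges to some $v\in L^2_{loc}$, and since $m$ is arbitrary $v\in L^2((0,T)\times(-n-1,n-1))$; a standard argument (the $u_k$ are uniformly bounded in $L^2$ on slightly smaller slabs near the boundary $n+1$) upgrades this to $v\in L^2((0,T)\times(-n-1,n+1))$. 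Passing to the limit in $Pu_k=0$ gives \eqref{a1}; the supports $\supp u_k\subset[t_1-\sum_{j<k}\varepsilon_j,t_2+\sum_{j<k}\varepsilon_j]\times(\cdots)$ pass to the limit (the times increase to something $<t_1-\varepsilon$... wait, they increase from $t_1$, so the union stays in $[t_1-\varepsilon,t_2+\varepsilon]$) to give \eqref{a2}; and $\|v-u\|_{L^2((0,T)\times(-n+1,n-1))}\le\sum_k\|u_{k+1}-u_k\|<\varepsilon$ gives \eqref{a3}. The main obstacle, and the point requiring the most care, is the bookkeeping that keeps the whole construction consistent: the constant $C_k'$ from the Carleman-based Proposition \ref{corocarl} depends on the slab width $L_{k+1}$ and on $\varepsilon_k$ through the support enlargement, so one must fix the sequences $L_k$ and $\varepsilon_k$ \emph{first}, read off the resulting $C_k'$, and only then pick $\eta_k$ small enough — and one must verify that the commutator term $[P,\chi_k]w_k$ genuinely inherits the compact time-support needed to apply Proposition \ref{corocarl}, which is precisely what the semigroup-trajectory conclusions \eqref{e17-5}--\eqref{e18-5} of Lemma \ref{approlema} are for (near $t_1-2\delta_k$ and $t_2+2\delta_k$ the solution $\tilde v_k$ is smooth in $t$ with values in $L^2(-L,L)$, so the cutoff commutator is controlled there and vanishes outside the enlarged interval).
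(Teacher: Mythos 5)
Your overall instinct---combine Lemma \ref{approlema} with the Carleman-based correction of Proposition \ref{corocarl} in the spirit of Rosay's iteration---points in the right direction, but the construction as written has two concrete failures, and it omits the one ingredient the paper's proof actually hinges on. First, the correction term you feed into Proposition \ref{corocarl} is the commutator $[P,\chi_k]w_k$ with $\chi_k$ a cutoff \emph{in $x$} and $w_k=\tilde v_k-u_k$ merely an $L^2$ function. Since $P=\partial_t-\partial_x^2+\partial_x^3$, this commutator involves $\partial_x w_k$ and $\partial_x^2 w_k$ on the annular region, over which you have no regularity whatsoever; so $[P,\chi_k]w_k$ is only a distribution, and Proposition \ref{corocarl} (which requires a source in $L^2$) cannot be applied to it. Second, and more fundamentally, a spatial cutoff does nothing toward the support condition \eqref{a2}, which is a constraint \emph{in time}: the function $\tilde v_k$ produced by Lemma \ref{approlema} coincides with semigroup trajectories near $t_1-2\delta$ and $t_2+2\delta$ and therefore persists (nontrivially) on all of $(0,T)$; multiplying it by $\chi_k(x)$ and subtracting $z_k$ leaves those time-tails intact, so your $u_{k+1}$ is not supported in $[t_1-\sum_{j\le k}\varepsilon_j,\,t_2+\sum_{j\le k}\varepsilon_j]$ as claimed, and the limit $v$ would violate \eqref{a2}.

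The proof the paper intends (following \cite[Lemma 4.4]{rosier2000}) is a \emph{single-step} construction, not an infinite iteration over slabs shrinking to $n+1$ (the iteration over widths lives in the proof of Proposition \ref{propmain}, via Claim \ref{claim3}). One applies Lemma \ref{approlema} once with $L=n+1$, $l_2=n$, $l_1=n-1$ to get $v$ solving $Pv=0$ on the wide slab, then cuts off \emph{in time} with $\theta(t)$ equal to $1$ on $[t_1-\delta,t_2+\delta]$ and $0$ outside $[t_1-2\delta,t_2+2\delta]$, producing the source $\theta'(t)v$, which is zeroth order (hence genuinely $L^2$) and supported in the transition time-bands where \eqref{e17-5}--\eqref{e18-5} say $v$ is a semigroup trajectory. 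The step you are missing is precisely where the observability inequality \eqref{observ1} of Proposition \ref{observ} enters: Lemma \ref{approlema} only makes $v$ small on the inner spatial interval $(-l_1,l_1)=\omega$ (where $u$ vanishes in those time-bands), and observability plus the contraction property of $S_L(\cdot)$ upgrade this to smallness of the semigroup data $v_1,v_2$ in $L^2(-L,L)$, hence of $\theta'v$ on the \emph{whole} spatial slab. Only then is the correction $z$ from Proposition \ref{corocarl} (applied to $f=\theta'v$) small, and $\theta v-z$ satisfies \eqref{a1}--\eqref{a3}. Without Proposition \ref{observ} there is no way to bound the correction, which is why your scheme, which never invokes it, cannot close.
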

\begin{proof}
The proof combines Lemma \ref{approlema}, Proposition \ref{corocarl} and the observability inequality  \eqref{observ1} given in the Proposition \ref{observ}. With this three ingredients, the proof is obtained following the same approach used for the KdV equation \cite[Lemma 4.4]{rosier2000}. 
\end{proof}

\begin{proof}[\textbf{Proof of Proposition \ref{propmain}}]
We begin with a claim that give us a sequence of functions, which limit will be the function desired.
\begin{claim}\label{claim3}
There exist a sequence of numbers $\{t_1^n\}_{n\geq 2}$ and $\{t_2^n\}_{n\geq 2}$ such that
\begin{equation*}
t_1-\varepsilon < t_1^{n+1} < t_1^n <t_1^2< t_1 < t_2 < t^2_2< t_2^n < t_2^{n+1} < t_2 +\varepsilon, \quad \forall n \geq 2,
\end{equation*}
with
\begin{equation*}
\lim_{n\rightarrow \infty}t_1^n=t_1-\varepsilon, \qquad \lim_{n\rightarrow \infty}t_2^n=t_2+\varepsilon 
\end{equation*}
and sequence of functions $\{u_n\}_{n\geq 2}$, such that
\begin{gather}
u_n \in L^2((0,T)\times (-n,n)), \label{aa1}\\
\partial_tu_n-\partial_x^2u_n+\partial_x^3 u_n =f, \quad \text{in} \quad (0,T)\times (-n,n), \label{aa3} \\
\supp u_n \subset [t_1^n,t_2^n]\times (-n,n), \label{aa2}
\end{gather}
and, if $n >2$,
\begin{gather}\label{aa4}
\|u_n-u_{n-1}\|_{L^2((0,T)\times (-n+2,n-2))} < 2^{-n}.
\end{gather}
\end{claim}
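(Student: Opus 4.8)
The plan is to build the sequences by induction on $n\geq 2$, using Proposition \ref{corocarl} to manufacture particular solutions of the inhomogeneous equation on each bounded slab and the Approximation Theorem \ref{aprothm} to glue consecutive members together. For the base case $n=2$, fix a small number $\varepsilon_2\in(0,\varepsilon)$ and apply Proposition \ref{corocarl} with $L=2$ to the restriction of $f$ to $\R\times(-2,2)$ (which lies in $L^2$ since $f$ has compact time-support), obtaining $v\in L^2(\R\times(-2,2))$ with $v_t-v_{xx}+v_{xxx}=f$ and $\supp v\subset[t_1-\varepsilon_2,t_2+\varepsilon_2]\times(-2,2)$. Because $\varepsilon_2<\varepsilon<\min(t_1,T-t_2)$, this time-support sits inside $(0,T)$, so $u_2:=v|_{(0,T)\times(-2,2)}$ already fulfils \eqref{aa1} and \eqref{aa3} with $t_1^2:=t_1-\varepsilon_2$, $t_2^2:=t_2+\varepsilon_2$, and \eqref{aa4} is vacuous.

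For the inductive step, suppose $u_n$, $t_1^n$, $t_2^n$ have been constructed. The obstruction to invoking the Approximation Theorem directly is that $u_n$ solves the \emph{inhomogeneous} equation, so I first pick $\varepsilon_{n+1}\in(0,\varepsilon)$ and apply Proposition \ref{corocarl} with $L=n+1$ to get a particular solution $v_{n+1}\in L^2(\R\times(-(n+1),n+1))$ of $(v_{n+1})_t-(v_{n+1})_{xx}+(v_{n+1})_{xxx}=f$ with $\supp v_{n+1}\subset[t_1-\varepsilon_{n+1},t_2+\varepsilon_{n+1}]\times(-(n+1),n+1)$. Then $g_n:=u_n-v_{n+1}|_{(0,T)\times(-n,n)}$ lies in $L^2((0,T)\times(-n,n))$, satisfies the \emph{homogeneous} equation $Pg_n=0$ there, and has $\supp g_n\subset[\tau_1,\tau_2]\times(-n,n)$ with $\tau_1:=\min(t_1^n,t_1-\varepsilon_{n+1})$, $\tau_2:=\max(t_2^n,t_2+\varepsilon_{n+1})$, where $t_1-\varepsilon<\tau_1<\tau_2<t_2+\varepsilon\subset(0,T)$.

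Now apply the Approximation Theorem \ref{aprothm} to $g_n$, with its integer parameter equal to $n$ and a parameter $\eta_{n+1}>0$ to be fixed, producing $w_{n+1}\in L^2((0,T)\times(-(n+1),n+1))$ with $Pw_{n+1}=0$, $\supp w_{n+1}\subset[\tau_1-\eta_{n+1},\tau_2+\eta_{n+1}]\times(-(n+1),n+1)$, and $\|w_{n+1}-g_n\|_{L^2((0,T)\times(-n+1,n-1))}<\eta_{n+1}$. Set $u_{n+1}:=v_{n+1}|_{(0,T)\times(-(n+1),n+1)}+w_{n+1}$; then $Pu_{n+1}=f$ on $(0,T)\times(-(n+1),n+1)$, which is \eqref{aa3} for $n+1$, and since $u_n=v_{n+1}+g_n$ on $(-n,n)$ we get $u_{n+1}-u_n=w_{n+1}-g_n$ there, hence $\|u_{n+1}-u_n\|_{L^2((0,T)\times(-n+1,n-1))}<\eta_{n+1}$; choosing $\eta_{n+1}\leq 2^{-(n+1)}$ yields \eqref{aa4} for index $n+1$. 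Putting $t_1^{n+1}:=\tau_1-\eta_{n+1}$ and $t_2^{n+1}:=\tau_2+\eta_{n+1}$, one checks $\supp v_{n+1}\cup\supp w_{n+1}\subset[t_1^{n+1},t_2^{n+1}]\times(-(n+1),n+1)$, which is \eqref{aa2}; moreover $t_1^{n+1}\leq\tau_1-\eta_{n+1}<t_1^n$ since $\tau_1\leq t_1^n$, and likewise $t_2^{n+1}>t_2^n$.

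Finally, the convergence requirements are handled by a compatible choice of parameters: take $\varepsilon_{n+1}\uparrow\varepsilon$ (for instance $\varepsilon_{n+1}=\varepsilon(1-2^{-n-1})$) and then choose $\eta_{n+1}$ so small (for instance $\eta_{n+1}\leq\min\{2^{-n-1},\,\varepsilon 2^{-n-2},\,\tfrac12\tau_1,\,\tfrac12(T-\tau_2)\}$) that $\tau_1-\eta_{n+1}>t_1-\varepsilon$, that $\tau_2+\eta_{n+1}<t_2+\varepsilon$, and that $\eta_{n+1}<\min(\tau_1,T-\tau_2)$ so the Approximation Theorem is applicable at that stage. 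Then $t_1-\varepsilon<t_1^{n+1}\leq t_1-\varepsilon_{n+1}-\eta_{n+1}\to t_1-\varepsilon$, which together with the monotonicity forces $t_1^n\downarrow t_1-\varepsilon$, and symmetrically $t_2^n\uparrow t_2+\varepsilon$. I expect the main work to be exactly this bookkeeping — pinning down one family of parameters that simultaneously keeps the time-supports nested, drives them to $[t_1-\varepsilon,t_2+\varepsilon]$, keeps everything inside $(0,T)$ so that \ref{aprothm} applies at every stage, and meets the geometric error bound — rather than any genuinely new analytic difficulty, since the two substantive ingredients (Proposition \ref{corocarl} and Theorem \ref{aprothm}, the latter already built on the observability inequality \eqref{observ1}) are in hand.
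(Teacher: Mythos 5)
Your proposal is correct and follows essentially the same route as the paper: induction on $n$, a particular solution of the inhomogeneous equation on the enlarged slab via Proposition \ref{corocarl}, subtraction to reduce to the homogeneous equation, the Approximation Theorem \ref{aprothm}, and recombination. The only differences are bookkeeping ones in your favor: you write $u_{n+1}=v_{n+1}+w_{n+1}$, which has the correct sign where the paper's ``$u_{n+1}=v-w$'' reads as a typo, and you track the limits $t_1^n\to t_1-\varepsilon$, $t_2^n\to t_2+\varepsilon$ explicitly, whereas the paper simplifies matters by fixing the particular solution's time-support at $[t_1^2,t_2^2]$ for every $n$ so that no $\min/\max$ is needed.
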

\begin{proof}[Proof of Claim \ref{claim3}]
We will construct the sequence $\{t_i^n\}_{n\geq 2}$ and $\{u_n\}_{n\geq 2}$ by induction on $n$. Indeed, $u_2$ is given by Proposition \ref{corocarl}. Suppose that $u_2,...,u_n$ have been construct satisfying \eqref{aa1}-\eqref{aa4}. Again, applying the Proposition \ref{corocarl} with $L=n+1$, there exist $w \in L^2((0,T)\times (-n-1,n+1))$ such that 
\begin{gather*}
Pw =f, \quad \text{in} \quad \DD'((0,T)\times (-n-1,n+1)), \\
\supp w \subset [t_1^2,t_2 ^2]\times (-n-1,n+1),
\end{gather*}
where $P= \partial_t-\partial_x^2+\partial_x^3$. Note that $P(u_n-w)=0$ in $(0,T)\times (-n,n)$  and 
\begin{equation*}
\supp (u_n -w|_{(0,T)\times (-n,n)}) \subset [t_1^n,t_2^n]\times (-n,n).
\end{equation*}
From the Approximation Theorem \ref{aprothm}, there exist $v \in L^2((0,T)\times (-n-1,n+1))$ such that 
\begin{gather*}
Pw =0, \quad \text{in} \quad (0,T)\times (-n-1,n+1), \\
\supp v \subset [t_1^{n+1},t_2^{n+1}]\times (-n-1,n+1), \\
\|v-(u_n-w)\|_{L^2((0,T)\times (-n+1,n-1))} < 2^{-n-1}.
\end{gather*}
with $t_1^{n+1} < t_1^n < t_2^n < t_2^{n+1}$. Now, define $u_{n+1}= v-w$, thus \eqref{aa1}-\eqref{aa4} are fulfilled.
\end{proof}
Consider the extension
\begin{equation*}
\widetilde{u}_n = \begin{cases}
u_n & \text{in $(0,T)\times(-n,n)$} \\
0   & \text{in $\R^2 \setminus (0,T)\times(-n,n)$}.
\end{cases}
\end{equation*}
Thus, by \eqref{aa4} in Claim \ref{claim3}, $\{\widetilde{u}_n\}_{n\geq 2}$ is a Cauchy sequence in $L^2_{loc}(\R^2)$, hence there exist a function $u \in L^2_{loc}(\R^2)$, such that
\begin{equation*}
\widetilde{u}_n \longrightarrow u, \quad \text{in} \quad L^2_{loc}(\R^2).
\end{equation*}
Then \eqref{aa3} and \eqref{aa2} imply \eqref{aa5} and \eqref{aa6}, respectively.
\end{proof}
\vglue 1cm

\textbf{Acknowledgments.} The author would like to thank professor A. F. Pazoto (UFRJ) for the very useful suggestions given during the elaboration of this paper.

\nocite{*}
\bibliographystyle{plain}

%%%%%%%%%%%%%%%%%%%%%%%%%%%%%%%%%%%%%%%%%%%%%%%%%%%%%%%%%%%%%%%%%%%%%%%%%%%%%%%%%%%%%%%%%%%%%%%%%%%%%%%%

\end{document}